\documentclass[11pt,reqno]{amsart}
\usepackage[T1]{fontenc}
\usepackage{lmodern}
\usepackage{amsmath,amssymb,mathtools}
\usepackage{microtype}
\usepackage{needspace}
\usepackage{tikz,booktabs}
\usepackage[a4paper,margin=30mm]{geometry}
\usepackage[hidelinks]{hyperref}
\hypersetup{
  pdftitle={Paths maximize the expected range of graph-indexed random walks},
  pdfauthor={Yinfeng Zhu},
  pdfsubject={The BHM expectation theorem and its LNR corollary},
  pdfkeywords={graph homomorphisms, integer Lipschitz functions, expected range}
}
\newtheorem{theorem}{Theorem}[section]
\newtheorem{proposition}[theorem]{Proposition}
\newtheorem{lemma}[theorem]{Lemma}
\newtheorem{corollary}[theorem]{Corollary}
\theoremstyle{remark}
\newtheorem{remark}[theorem]{Remark}
\numberwithin{equation}{section}
\newcommand{\Z}{\mathbb Z}
\newcommand{\E}{\mathbb E}
\newcommand{\PP}{\mathbb P}
\newcommand{\HH}{\mathcal H}
\newcommand{\LL}{\mathcal L}
\newcommand{\one}{\mathbf 1}
\DeclareMathOperator{\Bin}{Bin}
\DeclareMathOperator{\rk}{r}
\DeclareMathOperator{\diver}{div}
\title[Expected ranges of graph-indexed random walks]{Paths maximize the expected range\texorpdfstring{\\}{ }of graph-indexed random walks}
\author{Yinfeng Zhu}
\thanks{Independent Researcher.}
\date{}
\subjclass[2020]{Primary 05C60, 60C05; Secondary 05C35, 60G50}
\keywords{Graph homomorphism, integer Lipschitz function, graph-indexed random walk, expected range, zero-edge contraction}
\begin{document}
\begin{abstract}
We prove that a path maximizes the expected range of a uniformly chosen
graph homomorphism into the integers, with one vertex pinned at zero,
among all connected bipartite graphs of the same order. This establishes
the expectation form of the Benjamini--H\"aggstr\"om--Mossel conjecture.
The proof restricts and rescales a homomorphism on each bipartition class,
then contracts the edges on which the resulting height function is
constant. A quantitative estimate for the rank of these zero edges
compensates for a parity term in the expected range of a simple random
walk, allowing an induction on the number of vertices. We then prove
that the BHM inequality implies the Loebl--Ne\v set\v ril--Reed inequality
for uniformly chosen integer 1-Lipschitz functions on arbitrary connected
graphs, and hence obtain the LNR conjecture as a corollary of BHM.
The proof was obtained through interaction with OpenAI GPT-6 Astra
and verified by the author. The main results have also been formalized
and checked in Lean~4.
\end{abstract}
\maketitle

\section{Introduction}\label{sec:intro}

Let $G=(V,E)$ be a finite connected simple graph and fix a vertex $o\in V$.
A \emph{standard height function} is a map $f:V\to\Z$ satisfying
\[
 f(o)=0,\qquad |f(u)-f(v)|=1\quad(uv\in E).
\]
Denote this set by $\HH(G,o)$. It is nonempty exactly when $G$ is bipartite.
Indeed, the parity of a standard height function is a proper two-colouring;
conversely, distance from the root is a standard height function on a
connected bipartite graph.
A \emph{lazy height function} satisfies the same root condition and
$|f(u)-f(v)|\le1$ on every edge; write $\LL(G,o)$ for its set.
Both sets are finite whenever they are nonempty, since
$|f(v)|\le\operatorname{dist}_G(o,v)$.
For either model, define the range by
\[
 R(f)=\max_{v\in V}f(v)-\min_{v\in V}f(v).
\]
The image of a height function on a connected graph is an integer interval.
Consequently, the alternative convention $|f(V)|$ increases every range
and every expected range by one and leaves all comparisons below unchanged.

Taking expectations with respect to the uniform measures, define
\[
 \widehat h(G)=\E_{\HH(G,o)}R(f),\qquad
 h(G)=\E_{\LL(G,o)}R(f).
\]
These expectations are independent of $o$: the map $f\mapsto f-f(o')$
gives a range-preserving bijection between the height functions rooted
at $o$ and those rooted at $o'$.
We suppress the root from the notation when its choice is immaterial.
The path $P_n$ has $n$ vertices. On a path, the standard model coincides
with a simple random walk with independent increments in $\{-1,1\}$;
the lazy model has independent increments uniform on $\{-1,0,1\}$.

Benjamini, H\"aggstr\"om, and Mossel~\cite{BHM} introduced the standard
graph-indexed model and conjectured that paths maximize its expected range.
The following theorem establishes this assertion for all connected bipartite graphs.

\begin{theorem}[BHM expectation conjecture]\label{thm:bhm}
For every finite connected bipartite simple graph $G$ on $n$ vertices,
\begin{equation}\label{eq:bhm}
 \widehat h(G)\le\widehat h(P_n).
\end{equation}
\end{theorem}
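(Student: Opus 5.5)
We argue by induction on $n$, following the strategy announced in the abstract. Fix a bipartition $V=A\cup B$ of $G$ with the root $o\in A$. Every $f\in\HH(G,o)$ is even on $A$ and odd on $B$. The first step is to restrict and rescale: set $g(v)=f(v)/2$ for $v\in A$. For each $b\in B$, the neighbours of $b$ take values in $\{f(b)-1,f(b)+1\}$, so after rescaling, the values of $g$ on any two vertices of $A$ at distance two differ by at most $1$. Thus $g$ is a lazy height function on the square graph $G^2[A]$. Given $g$, the number of extensions to $B$ is a product over $b\in B$: the factor is $2$ if all neighbours of $b$ have equal $g$-value and $1$ otherwise. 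Hence the uniform measure on $\HH(G,o)$ projects to a weighted lazy model on $A$, with weight $2^{z(g)}$, where $z(g)$ counts the vertices of $B$ whose neighbourhoods are $g$-constant, the ``zero edges''.

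The range satisfies $R(f)=2R(g)+\epsilon$, where $\epsilon\in\{0,1,2\}$ records whether the extreme values are attained on $B$. A maximum is attained on $B$ exactly when some $b$ adjacent only to vertices at the top level of $g$ is lifted upward, and similarly at the bottom. For the path $P_n$, this decomposition is exactly the classical identity relating the simple random walk range to the lazy walk range on the even times, together with a parity correction. The known formula $\widehat h(P_n)\approx\sqrt{8n/\pi}$ with explicit parity-dependent lower-order terms is the target of the comparison.

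The second step is contraction. For a fixed configuration, contract every edge of $G$ (or of the auxiliary structure) on which the height function is constant. This yields a smaller connected graph $G'$, and the conditional law of $f$ given the zero-edge pattern is the uniform height function on $G'$. Then $\E R(f)$ becomes an average of $\widehat h(G')$ over graphs $G'$ with $n-\rk$ vertices, where $\rk$ is the rank of the zero-edge set. By induction, $\widehat h(G')\le\widehat h(P_{n-\rk})$. It then remains to show
\[
\E\bigl[\widehat h(P_{n-\rk})\bigr]+\text{(parity term)}\le\widehat h(P_n).
\]
Since $m\mapsto\widehat h(P_m)$ is increasing with increments of order $m^{-1/2}$ that alternate in parity, this reduces to a quantitative lower bound on $\E[\rk]$ under the induced measure. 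The zero edges must carry enough rank that the loss $\widehat h(P_n)-\widehat h(P_{n-\rk})$ absorbs the parity term, at most $1$, contributed by the $\epsilon$ correction.

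The main obstacle is this rank estimate. I would first try to show that each vertex of $B$ is a zero vertex with probability bounded below, via a local switching argument that flips $f(b)$ when its neighbours agree. I would then convert this into a rank bound by choosing a spanning forest greedily, so that zero edges contributing independent contractions are counted. Matching this bound exactly against the explicit increments $\widehat h(P_n)-\widehat h(P_{n-k})$ for small $n$, where the asymptotics are not yet accurate, may require a finite computation. That base-case bookkeeping is where the argument is most likely to be delicate.
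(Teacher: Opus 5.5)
There is a genuine gap. Your restriction to one class and the weight $2^{z(g)}$ are correct; this is the paper's single-class marginal. The plan breaks at the range identity, because the term $\epsilon$ in $R(f)=2R(g)+\epsilon$ is not a parity correction of size at most one. Let $g'$ be the rescaled restriction of $f$ to the other class $B$. The exact identity is $R(f)=1+R(g)+R(g')$, so $\epsilon=1+R(g')-R(g)$ carries the whole contribution of $B$. For $P_3$ rooted at its centre, $R(g)\equiv0$ and $\E\epsilon=3/2=\widehat h(P_3)$. So $\E\epsilon$ is not an error that a rank surplus in $G^2[A]$ could absorb; in this example $G^2[A]$ is a single vertex and there is no surplus at all. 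Bounding $\E\epsilon$ requires an upper bound on $\E R(g')$, which is the same comparison on the other class.

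The missing idea is to decimate \emph{both} classes and use the error-free identity. This gives $\widehat h(G)=1+\E\widehat h(Q_A)+\E\widehat h(Q_B)$, where $Q_A,Q_B$ are the zero-edge quotients of the two auxiliary graphs. Each term is then bounded separately by $t_{|A|-1}$ and $t_{|B|-1}$, with $t_j=\E b_{Y_{j,1/2}}$. Concavity of $(t_j)$ and the path identity $b_{n-1}=1+t_{\lfloor(n-2)/2\rfloor}+t_{\lceil(n-2)/2\rceil}$ close the induction.

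Your contraction step also needs repair. A standard $f$ has no zero edges in $G$, so one must contract the full zero set of $g$ in $G^2[A]$. The quotient has $|A|-\rk$ vertices, not $n-\rk$, and the descended function is uniform on it only conditional on the \emph{full} zero set.

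Second, the single-class estimate you need, $\E b_{K-1}\le\E b_{Y_{d,1/2}}$ with $d=|A|-1$, does not follow from zero probabilities being ``bounded below''. On the path every auxiliary edge is zero with probability exactly $1/2$. You therefore need the sharp bound $\PP(F\subseteq A_g)\ge2^{-|F|}$ for every forest $F$ of auxiliary edges, where $A_g$ is the full zero set. The paper gets this from a sign-reflection argument rooted at a common neighbour, iterated by identifying vertices.

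Even this only gives $\E s^{K-1}\ge((1+s)/2)^d+\Delta(1-s)s^{d-1}$. That controls the positive-kernel envelope $B_m$, but not $b_m=B_m-a_m$, whose error $a_m$ is positive at even $m\ge2$. To absorb $\E a_{Y_{d,1/2}}<J_d/(2d)$, the paper proves a rank surplus $\Delta\ge1/(2d)$ whenever the auxiliary graph has a cycle:
\begin{itemize}
\item on an odd cycle, by a pointwise rank inequality;
\item on an even cycle, by a Fourier argument showing that constant boundary values maximize the extension weight, so the cycle has no zero edge with at most half the probability that all its edges are zero.
\end{itemize}
Tree auxiliary graphs are handled by independent edge increments and a coupling. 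This is a uniform-in-$d$ argument rather than small-$n$ bookkeeping, and your plan has no substitute for it.
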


Loebl, Ne\v set\v ril, and Reed~\cite{LNR} considered the lazy model,
which is defined on every connected graph. Their corresponding path
extremality conjecture follows from Theorem~\ref{thm:bhm}.

\begin{corollary}[LNR expectation conjecture]\label{cor:lnr}
For every finite connected simple graph $G$ on $n$ vertices,
\begin{equation}\label{eq:lnr}
 h(G)\le h(P_n).
\end{equation}
If $G$ contains a cycle, the inequality is strict.
\end{corollary}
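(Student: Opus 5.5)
The plan is to deduce \eqref{eq:lnr} from Theorem~\ref{thm:bhm} by turning lazy height functions on $G$ into standard height functions on an auxiliary connected bipartite graph, and then to compare the two path quantities $\widehat h(P_m)$ and $h(P_n)$ directly. For the auxiliary graph I take $B=B(G)$ on $V\times\{0,1\}$, with an edge $(u,0)(v,1)$ whenever $uv\in E$ or $u=v$; this is the bipartite double cover of $G$ with a loop added at every vertex. It is connected and bipartite with $2n$ vertices. If $g\in\HH(B,(o,0))$, then for every $uv\in E$ the vertices $(u,0)$ and $(v,0)$ share the neighbour $(v,1)$. Hence $g(u,0)-g(v,0)\in\{-2,0,2\}$, so $f(v)=g(v,0)/2$ lies in $\LL(G,o)$. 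This is the ``restrict and rescale on a bipartition class'' map mentioned in the abstract. It is surjective. Its fibre over $f$ has size $2^{z(f)}$, where $z(f)$ counts the vertices $v$ on which $f$ is constant on the closed neighbourhood $N[v]$: such $(v,1)$ may take $2f(v)\pm1$, and every other $(v,1)$ is forced. Moreover $R(g)\in\{2R(f),2R(f)+1,2R(f)+2\}$.

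Step one is to write $\widehat h(B)$ as a $2^{z(f)}$-weighted average over $\LL(G,o)$ of $2R(f)+\varepsilon(g)$. Step two is to invoke Theorem~\ref{thm:bhm}, which gives $\widehat h(B)\le\widehat h(P_{2n})$. Step three is to run the same construction on $P_n$ itself and obtain an exact identity expressing $\widehat h(P_{2n})$ through the lazy walk on $P_n$ and the same weights. Since $B(P_n)$ is a ladder, I would instead use a cleaner auxiliary graph for the path: the direct bijection that sends a lazy walk with steps in $\{-1,0,1\}$ to a simple walk of twice the length with steps $\pm1$ pairs (so the step pairs $(+,+)$, $(-,-)$, $(+,-)$, $(-,+)$ give $2,-2,0,0$). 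This pairing shows that the lazy steps are not uniform, and the resulting mismatch must be corrected.

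The main obstacle is exactly this non-uniformity. The weights $2^{z(f)}$ and the parity corrections $\varepsilon$ bias the comparison, so Theorem~\ref{thm:bhm} bounds a tilted lazy expectation rather than $h(G)$ itself. My first attempt would be to choose the auxiliary bipartite graph more cleverly so that the map becomes exactly $k$-to-one. One option is to replace each vertex by a small gadget whose number of extensions is independent of whether its neighbourhood is constant. A second option, if no such gadget exists, is to subdivide each edge of $G$ by a path of length two and add pendant structure that equalises the $0$-difference and $\pm2$-difference edge counts. The identity for general $G$ and for $P_n$ would then come from one common mechanism, and monotonicity would follow because the correction term depends only on $n$. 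If equalisation is impossible, I would fall back on proving that the tilt can only increase the expected range: large $z(f)$ means locally flat functions, which should have smaller range, so an FKG-type correlation argument is plausible.

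For strictness when $G$ contains a cycle, I would show that the comparison map is strictly lossy. With a cycle, some lazy functions on $G$ correspond to genuinely fewer or shorter ranges than on a spanning tree. Concretely, I would compare $G$ to a spanning tree $T$ obtained by deleting one cycle edge $e=xy$. Then $\LL(G,o)\subsetneq\LL(T,o)$, since the functions with $|f(x)-f(y)|=2$ are removed. I would show that the removed functions have strictly larger average range, which strictly lowers the expectation. Finally, $h(T)\le h(P_n)$ by the non-strict inequality already established.
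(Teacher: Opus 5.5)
Your proposal has a structural gap: the lifting approach does not reach $h(G)$, and the proposed repairs cannot close it.

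\textbf{The lifting step.} First, the map is not surjective. In $B(G)$ the vertices $(u,0)$ and $(u',0)$ share the neighbour $(v,1)$ whenever $u,u'\in N[v]$. So $g(\cdot,0)/2$ is $1$-Lipschitz on the square $G^2$, not merely on $G$. For example, $f=(0,1,2)$ on $P_3$ has no lift, since $g(v_2,1)$ would have to differ by one from both $0$ and $4$. Step one therefore averages over $\LL(G^2)$ with weight $2^{z(f)}$; this is Lemma~\ref{lem:marginal} with auxiliary graph $G^2$.

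Second, no choice of connected bipartite graph $H$ makes a restrict-and-halve map uniform onto $\LL(G)$ when $G$ is a tree. For the image to lie in $\LL(G)$, every edge $uv$ of $G$ needs $\operatorname{dist}_H(u,v)=2$. Otherwise the standard function $\operatorname{dist}_H(u,\cdot)$ gives $|f(u)-f(v)|\ge 2$. Lemma~\ref{lem:reflection} then forces $\PP(f(u)=f(v))\ge 1/2$. The uniform lazy model on a tree has zero probability exactly $1/3$ on each edge. So on $P_n$, which your ``common mechanism'' needs, the induced law is always tilted toward zero edges. Subdividing every edge, your second option, produces exactly the weight $2^{|A_f|}$.

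Third, your FKG fallback points the wrong way. As you say, flat functions have smaller range, so a tilt toward flatness should \emph{decrease} the expected range. An upper bound on the tilted expectation then says nothing about $h(G)$.

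Fourth, Theorem~\ref{thm:bhm} applied to $B(G)$ gives the bound $\widehat h(P_{2n})$. Nothing relates this to $h(P_n)=\E b_{Y_{n-1,2/3}}$; your own pairing produces steps with law $(1/4,1/2,1/4)$, not uniform ones.

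\textbf{The strictness step.} Claiming that the functions removed by restoring a cycle edge $e$ have strictly larger average range is equivalent to $h(G)<h(G-e)$. You give no argument for this edge-deletion monotonicity. If it were available, iterating it down to a spanning tree would reduce the corollary to trees without using Theorem~\ref{thm:bhm} at all.

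\textbf{What the paper does instead.} The missing idea is to contract rather than lift. By Lemma~\ref{lem:fullzero}, conditioning a uniform lazy $f$ on its full zero set $A_f$ makes the descended function uniform on $\HH(G/A_f)$, with the same range. This gives the untilted identity $h(G)=\E\widehat h(G/A_f)$. Applying Theorem~\ref{thm:bhm} to each quotient then gives $h(G)\le\E b_{K-1}$.

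The real work is Proposition~\ref{prop:lazy-contraction}: $\E b_{K-1}\le\E b_{Y_{d,2/3}}=h(P_n)$, with strict inequality when $G$ has a cycle. The paper proves it from three ingredients, combined through Proposition~\ref{prop:comparison}:
\begin{itemize}
\item the forest bound $\PP(F\subseteq A)\ge 3^{-|F|}$, via a switching argument giving log-concavity;
\item the rank surplus $\Delta_G\ge 2/(7d)$;
\item the parity budget of Lemma~\ref{lem:lazy-budget}.
\end{itemize}
Strictness for cyclic graphs comes out of this comparison, not from deleting edges.
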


The BHM conjecture also has a stronger formulation: the entire range
distribution should be stochastically dominated by that of a path.
Theorem~\ref{thm:bhm} establishes the expectation formulation.
Wu, Xu, and Zhu~\cite{WXZ} proved both expectation inequalities for trees.
Bok and Ne\v set\v ril~\cite{BN} extended them to unicyclic graphs.
For the distributional question, Berger, Ji, and Metz~\cite{BJM} proved
stochastic domination for all trees in the lazy model and for spiders
in the standard model. Their introduction also records the expectation
and distributional forms of the BHM conjecture. Our proof does not rely
on the extremal results for these special graph classes.

\subsection*{The proof mechanism}
For a lazy height function $g$ on a connected graph $S$, let
\[
 A_g=\{uv\in E(S):g(u)=g(v)\}
\]
be its full set of zero edges. Contract the connected components of
$(V(S),A_g)$ and delete loops and repeated edges. The resulting graph is
bipartite, because $g$ descends to a standard height function on it.
If the sampling weight of $g$ depends only on $A_g$, then, conditional
on the full set $A_g$, the descended function is uniform among the
standard height functions on the quotient.

To prove the BHM theorem, we first restrict $f$ to each class of a bipartition
$V=B\sqcup W$ and divide the height differences by two. This produces
weighted lazy functions $g_B,g_W$ on two smaller auxiliary graphs.
Their ranges satisfy the pointwise identity
\[
 R(f)=1+R(g_B)+R(g_W).
\]
This identity reduces the induction to a comparison between the numbers
of zero-edge components in the auxiliary graphs and in the path model.

The key comparison retains the surplus in the mean rank of the zero
edges. A lower bound on the probability that all edges of a specified
forest are zero yields a generating-function inequality that quantifies
this surplus. We compare the expected range of a simple random walk
with a positive-kernel upper envelope whose error is supported on
positive even times. A cycle in the auxiliary graph supplies enough
rank surplus to absorb the error. Odd cycles are
handled by a pointwise rank inequality. For bipartite auxiliary graphs,
the marginal weight factors over edges, and a finite Fourier argument
compares constant and nonconstant boundary values on an even cycle.

Section~\ref{sec:decimation} develops the reduction to the two auxiliary
graphs. Section~\ref{sec:comparison} proves the analytic comparison.
Section~\ref{sec:bhmproof} establishes the rank surplus and completes
the proof of BHM. Only then, in Section~\ref{sec:lnr}, do we prove that
BHM implies LNR. The latter implication uses a separate zero-edge rank
estimate for the uniform lazy model.

The graph $K_{2,3}$ serves as a running example for the auxiliary
graphs, their induced measures, and the contraction argument.

\subsection*{Formal verification and code availability}
Theorem~\ref{thm:bhm} and Corollary~\ref{cor:lnr}, including strictness
for graphs containing a cycle, have been formalized in Lean~4~\cite{Lean4}
with mathlib~\cite{Mathlib}. The formal proofs were checked using
Lean~4.34.0, without \texttt{sorry} placeholders or axioms beyond Lean's
standard classical foundations. The source code, pinned dependencies,
and reproduction instructions are available in the accompanying
repository~\cite{GIRWLean}:
\begin{center}
\small\url{https://github.com/yinfengzhu7-oss/graph-indexed-random-walk-lean}
\end{center}
The verified version is commit
\texttt{1af0f5a281b7} on the \texttt{main} branch. The formalization
uses equivalent arguments for some intermediate steps; the repository
documents their correspondence with the proof below.

\section{Decimation and zero-edge contraction}\label{sec:decimation}

For a graph $S$ and $A\subseteq E(S)$, let $k_S(A)$ be the number of
components of the spanning subgraph $(V(S),A)$, including isolated
vertices, and define its graphic rank by
\[
 \rk_S(A)=|V(S)|-k_S(A).
\]
We omit the subscript if the underlying graph is fixed.

\Needspace{8\baselineskip}
\begin{lemma}[Conditioning on the full zero set]\label{lem:fullzero}
Let $S$ be connected and let $g\in\LL(S)$ have probability proportional
to a positive weight $\omega(A_g)$ depending only on its full zero set.
For every zero set $A$ of positive probability, the simple quotient $S/A$
is connected and bipartite. Conditional on $A_g=A$, the descended height
function is uniform on $\HH(S/A)$ and has range $R(g)$. In particular,
\begin{equation}\label{eq:quotient-average}
 \E R(g)=\E\widehat h(S/A_g).
\end{equation}
\end{lemma}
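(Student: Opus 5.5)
The plan is to build an explicit bijection between the lazy height functions $g\in\LL(S,o)$ with $A_g=A$ and a subset of $\HH(S/A,[o])$. Once this is in place, uniformity follows from the fact that the weight $\omega(A)$ is constant on this fibre.

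\emph{Step 1: descent.} Fix a zero set $A$ of positive probability and some $g$ with $A_g=A$. Because $g$ is constant on every edge of $A$, it is constant on every component of $(V(S),A)$. Hence $g$ factors through the vertex set of the quotient, giving a function $\bar g$. Now take any two distinct classes $C,C'$ of $S/A$ joined by an edge $uv$ of $S$. That edge is not in $A=A_g$, so $g(u)\neq g(v)$. Combined with $|g(u)-g(v)|\le1$, this gives $|\bar g(C)-\bar g(C')|=1$. There are no loops to worry about: an edge with both ends in one class $C$ has equal values at its ends, so it lies in $A_g=A$ and is contracted. Thus $\bar g\in\HH(S/A,[o])$. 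The quotient is connected because $S$ is, and it is bipartite because it carries a standard height function, using the remark in the introduction that $\HH$ is nonempty exactly for bipartite graphs. The image of $\bar g$ equals the image of $g$, so $R(\bar g)=R(g)$.

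\emph{Step 2: the map is bijective onto all of $\HH(S/A)$.} Injectivity is clear, since $g=\bar g\circ\pi$ where $\pi$ is the quotient map. For surjectivity, take any $\varphi\in\HH(S/A)$ and set $g=\varphi\circ\pi$. Then $g(o)=0$, and $|g(u)-g(v)|\le1$ holds on every edge: it is $0$ inside a class and $1$ across classes. It remains to check that $A_g=A$. Edges of $A$ lie inside classes, so they are zero edges. Conversely, a zero edge $uv$ must have both ends in the same class, since otherwise $|\varphi(\pi u)-\varphi(\pi v)|=1$. The subtle point is that an edge of $S$ joining two vertices of the same class but not itself in $A$ would also be a zero edge. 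So I must use that $A$ is a \emph{full} zero set, meaning $A=A_{g_0}$ for some $g_0$. For such an edge $uv$, the endpoints are joined by a path in $A$, so $g_0(u)=g_0(v)$, and therefore $uv\in A_{g_0}=A$. Hence $A$ is closed under adding edges inside its own components, and $A_g=A$ exactly.

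\emph{Step 3: conditioning and averaging.} The conditional law given $A_g=A$ is proportional to $\omega(A)$ on the fibre. This weight is constant, so the law is uniform on the fibre, and under the bijection it is uniform on $\HH(S/A)$. The range is preserved, so $\E[R(g)\mid A_g=A]=\widehat h(S/A)$. Averaging over $A$ gives \eqref{eq:quotient-average}. Positivity of $\omega$ is only needed so that conditioning makes sense.

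The only real subtlety is the closure property in Step 2, which is what the word "full" guarantees. Everything else is routine bookkeeping.
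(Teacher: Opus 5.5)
Your proposal is correct and follows the paper's proof essentially step for step. You descend $g$ via its constancy on the components of $(V(S),A)$, pull back standard height functions on $S/A$ as the inverse map, and use the fullness of $A$ to ensure that every edge inside a component lies in $A$ (exactly the paper's remark ``since the zero set is full''); conditional uniformity then follows from the constant weight $\omega(A)$ on the fibre.
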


\begin{proof}
The function $g$ is constant on each component of $(V(S),A)$.
Since the zero set is full, every original edge within such a component
belongs to $A$. Across distinct components, the height difference has
absolute value one. The descended function is therefore standard, and
its parity defines a bipartition of $S/A$.

Conversely, any rooted standard height function on $S/A$ pulls back to
a lazy height function whose full zero set is exactly $A$. Descent and
pullback are mutually inverse and preserve the range. All functions in
the fibre have the same weight $\omega(A)$, proving conditional uniformity. The
one-vertex quotient, with its unique height function, is included.
\end{proof}

Now let $G$ be connected and bipartite, with at least two vertices and
bipartition $V=B\sqcup W$. The \emph{auxiliary graph} $S_B$ has vertex
set $B$; two distinct vertices are adjacent if they have a common neighbour
in $W$. Define $S_W$ similarly. Both auxiliary graphs are connected,
where a one-vertex graph is regarded as connected. Choose $b_0\in B$ and
$w_0\in W$. For a standard height function on the original graph, define
\[
 g_B(u)=\frac{f(u)-f(b_0)}2\quad(u\in B),\qquad
 g_W(v)=\frac{f(v)-f(w_0)}2\quad(v\in W).
\]
Vertices in the same bipartition class have heights of the same parity.
It follows that $g_B$ and $g_W$ are integer-valued lazy height functions
on their auxiliary graphs.

\begin{lemma}[The single-class marginal]\label{lem:marginal}
For a uniform standard height function on $G$, the law of $g_B$ is
\begin{equation}\label{eq:marginal}
 \PP(g_B=g)\ \propto\ 2^{t_B(g)},\qquad
 t_B(g)=|\{w\in W:g\text{ is constant on }N_G(w)\}|.
\end{equation}
Every rooted lazy height function on $S_B$ occurs. The weight in
\eqref{eq:marginal} depends only on its full zero set.
\end{lemma}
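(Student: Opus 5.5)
We count the standard height functions $f$ on $G$ with a prescribed restriction to $B$, and then read off the three assertions. Root $f$ at $o=b_0$, so that $g_B(u)=f(u)/2$ and the map $f\mapsto (g_B,\,f|_W)$ is a bijection. We fix a function $g:B\to\Z$ with $g(b_0)=0$ and ask which values $f(w)$, $w\in W$, are admissible when $f|_B=2g$.

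The constraint on $f(w)$ is that $|f(w)-2g(u)|=1$ for every $u\in N_G(w)$, and $f(w)$ must be odd. Since $G$ is connected with at least two vertices, each $w$ has a nonempty neighbourhood. The possibilities are as follows.

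\begin{itemize}
\item If $g$ is constant, say equal to $c$, on $N_G(w)$, then $f(w)\in\{2c-1,2c+1\}$, which gives two choices.
\item If $g$ takes exactly two values $c$ and $c+1$ on $N_G(w)$, then $f(w)=2c+1$ is forced, which gives one choice.
\item If the values of $g$ on $N_G(w)$ spread by two or more, then there is no choice.
\end{itemize}

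The constraints for different vertices $w$ involve only $f(w)$ and $g$, so they are independent of one another. The number of extensions is therefore $\prod_{w}c_w$, where $c_w$ is the number of choices above.

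This product is nonzero exactly when $g$ varies by at most one on each $N_G(w)$. Every edge of $S_B$ lies inside some $N_G(w)$, and every pair of vertices in $N_G(w)$ is an edge of $S_B$ or coincides. So the product is nonzero exactly when $g\in\LL(S_B,b_0)$. This shows that every rooted lazy height function on $S_B$ occurs, and its count is exactly $2^{t_B(g)}$. Since $f$ is uniform, $\PP(g_B=g)\propto 2^{t_B(g)}$, which is \eqref{eq:marginal}.

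It remains to check that the weight depends only on the full zero set $A_g$. The function $g$ is constant on $N_G(w)$ exactly when every pair of distinct vertices in $N_G(w)$ is a zero edge of $g$. Each such pair is an edge of $S_B$, because its two vertices share the neighbour $w$. Hence the condition is that the complete graph on $N_G(w)$ lies inside $A_g$. A neighbourhood with one vertex satisfies this condition vacuously. Thus $t_B(g)$ counts the $w$ whose neighbourhood clique is contained in $A_g$, and it is a function of $A_g$ alone.

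The only delicate step is the case analysis for $c_w$. It needs the parity observation that $f(w)$ is odd, and it needs the fact that $N_G(w)\neq\emptyset$, which rules out infinitely many choices. I expect neither point to cause difficulty.
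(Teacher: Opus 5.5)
Your proof is correct and follows the paper's argument. You reroot at $b_0$, count the extensions to $W$ vertex by vertex (two choices when $g$ is constant on $N_G(w)$, one when $g$ takes two adjacent values there), multiply because $W$ is independent, and note that constancy on $N_G(w)$ means the clique on $N_G(w)$ lies in $A_g$. Your explicit spread-$\ge2$ case, which gives no extensions, also spells out why positive weight is equivalent to laziness on $S_B$, a point the paper leaves implicit.
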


\begin{proof}
Reroot at $b_0$. Each neighbourhood $N_G(w)$ is a clique in $S_B$, so its
$g$-values either all equal some $j$, or include both $j$ and $j+1$.
In the first case, $f(w)$ can be $2j-1$ or $2j+1$; in the second it must
be $2j+1$. Once the black heights are fixed, the choices for distinct
vertices $w$ can be made independently, because $W$ is an independent
set. Counting these extensions proves the stated marginal law and
shows that every $g$ occurs. Whether a neighbourhood is constant is
determined by its internal zero edges; a singleton neighbourhood is
automatically constant. Thus the weight depends only on $A_g$.
\end{proof}

Write $Q_B=S_B/A_{g_B}$ and $Q_W=S_W/A_{g_W}$, and let $K_B,K_W$
denote their numbers of vertices.

\begin{proposition}[Range decomposition]\label{prop:decimation}
For every standard height function on $G$,
\begin{equation}\label{eq:range-decomposition}
 R(f)=1+R(g_B)+R(g_W).
\end{equation}
Consequently,
\begin{equation}\label{eq:decimation}
 \widehat h(G)=1+\E\widehat h(Q_B)+\E\widehat h(Q_W).
\end{equation}
Each quotient has fewer vertices than $G$.
\end{proposition}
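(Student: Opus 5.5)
The plan is to prove the pointwise identity \eqref{eq:range-decomposition} first, then average it using Lemmas~\ref{lem:fullzero} and~\ref{lem:marginal}, and finally count vertices in the quotients.

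For the identity, write $M_B=\max_B f$, $m_B=\min_B f$, and similarly $M_W,m_W$. By definition $R(g_B)=(M_B-m_B)/2$ and $R(g_W)=(M_W-m_W)/2$. Heights on $B$ share one parity and heights on $W$ the other, so $M_B\ne M_W$. The key claim is $|M_B-M_W|=1$. Suppose, say, $M_B\ge M_W+3$. Take a vertex $u\in B$ with $f(u)=M_B$. Since $G$ is connected with at least two vertices, $u$ has a neighbour $w\in W$, and $f(w)\ge M_B-1>M_W$, a contradiction. The same argument gives $|m_B-m_W|=1$. Hence $\max f=\max(M_B,M_W)$ and $\min f=\min(m_B,m_W)$, and we split into four parity cases according to which class attains the maximum and which attains the minimum. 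For example, if $M_B=M_W+1$ and $m_B=m_W-1$, then
\[
R(f)=M_B-m_B=\tfrac12(M_B-m_B)+\tfrac12\bigl((M_W+1)-(m_W-1)\bigr)=1+R(g_B)+R(g_W).
\]
If instead both extremes lie in $B$ on the same side, say $M_B=M_W+1$ and $m_B=m_W+1$, then $R(f)=M_B-m_W$, and this equals $\tfrac12(M_B-m_B)+\tfrac12(M_W-m_W)+1$, because $M_B-m_W=(M_W+1)-(m_B-1)$ averaged appropriately. The remaining two cases are symmetric. This case check is the only place needing care, and it is routine.

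For \eqref{eq:decimation}, take expectations over uniform $f\in\HH(G)$. By Lemma~\ref{lem:marginal}, $g_B$ has law proportional to a positive weight depending only on $A_{g_B}$, with full support on $\LL(S_B)$. Lemma~\ref{lem:fullzero} then gives $\E R(g_B)=\E\widehat h(Q_B)$, and the same holds for $W$. We only need each marginal separately, so the joint dependence of $g_B$ and $g_W$ does not matter.

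For the vertex count, $K_B\le|B|$ and $K_W\le|W|$. Both classes are nonempty because $G$ is connected with at least two vertices, so $|B|,|W|<n$, and hence each quotient has fewer than $n$ vertices.
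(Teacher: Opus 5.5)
Your proposal is correct and follows the paper's proof. You bound $|M_B-M_W|$ and $|m_B-m_W|$ using neighbours in the opposite class, force both differences to equal one by parity, average with Lemmas~\ref{lem:marginal} and~\ref{lem:fullzero}, and count vertices. The only difference is cosmetic: the paper skips your four-case check by writing $\max f=(M_B+M_W+1)/2$ and $\min f=(m_B+m_W-1)/2$ directly, and in your second case the maximum lies in $B$ but the minimum lies in $W$, so your wording ``both extremes lie in $B$'' is inaccurate even though the computation is right.
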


\begin{proof}
Let $M_B,m_B$ and $M_W,m_W$ be the extrema of $f$ on the two classes.
Every vertex has a neighbour in the opposite class, so
$|M_B-M_W|\le1$ and $|m_B-m_W|\le1$. Since the two classes have opposite
height parities, both absolute differences equal one. It follows that
\[
 R(f)=\frac{M_B+M_W+1}{2}-\frac{m_B+m_W-1}{2}
      =1+R(g_B)+R(g_W).
\]
Taking expectations and applying Lemmas~\ref{lem:fullzero}
and~\ref{lem:marginal} gives the expected-range identity.
Finally, $K_B\le |B|<|V|$ and $K_W\le |W|<|V|$.
\end{proof}

\paragraph{Running example: a graph with five vertices.}
Let $G_\star=K_{2,3}$, with bipartition
$B=\{u_0,u_1\}$ and $W=\{v_0,v_1,v_2\}$, rooted at $u_0$.
Every vertex in one class is adjacent to every vertex in the other.
Thus $S_B$ is a single edge and $S_W$ is a triangle: an auxiliary
graph need not be bipartite, even though $G_\star$ is.
For the restrictions, take $b_0=u_0$ and $w_0=v_0$.
The standard height function
\[
 \bigl(f_\star(u_0),f_\star(u_1);
       f_\star(v_0),f_\star(v_1),f_\star(v_2)\bigr)
       =(0,0;-1,1,1)
\]
induces $g_B=(0,0)$ and $g_W=(0,1,1)$, with coordinates in the
listed vertex orders. Its full zero sets and quotients are
\[
 A_{g_B}=\{u_0u_1\},\qquad A_{g_W}=\{v_1v_2\},\qquad
 Q_B\cong P_1,\qquad Q_W\cong P_2.
\]
In $Q_W$, the two edges from $v_0$ to $v_1,v_2$ become a single edge
after repeated edges are removed. The descended heights are $0$ on
$Q_B$ and $0,1$ on $Q_W$, as shown in
Figure~\ref{fig:running-example}. The range decomposition reads
$R(f_\star)=2=1+0+1$.

\begin{figure}[htbp]
\centering
\begin{tikzpicture}[
  x=1cm,y=1cm,font=\small,
  vertex/.style={circle,draw,fill=white,inner sep=0pt,minimum size=5.5pt},
  blackvertex/.style={vertex,fill=black},
  zeroedge/.style={line width=1.6pt},
  every label/.style={font=\small,inner sep=2pt}]
  \path[use as bounding box] (-1.9,-2.05) rectangle (11.25,2.65);
  \node at (0,2.35) {(a) $G_\star$ and $f_\star$};
  \draw (-1.15,0.55)--(0,1.6)--(1.15,0.55)
        (-1.15,0.55)--(0,0.55)--(1.15,0.55)
        (-1.15,0.55)--(0,-0.5)--(1.15,0.55);
  \node[blackvertex,label=left:{$u_0:0$}] at (-1.15,0.55) {};
  \node[blackvertex,label=right:{$u_1:0$}] at (1.15,0.55) {};
  \node[vertex,label=above:{$v_0:-1$}] at (0,1.6) {};
  \node[vertex,label=below:{$v_1:1$}] at (0,0.55) {};
  \node[vertex,label=below:{$v_2:1$}] at (0,-0.5) {};
  \node at (0,-1.5) {$R(f_\star)=2$};

  \begin{scope}[xshift=4.6cm]
    \node at (0,2.35) {(b) $S_B$ and $Q_B$};
    \draw[zeroedge] (-0.85,1.1)--(0.85,1.1);
    \node[vertex,label=above:{$u_0:0$}] at (-0.85,1.1) {};
    \node[vertex,label=above:{$u_1:0$}] at (0.85,1.1) {};
    \draw[->,>=stealth] (0,0.55)--(0,-0.7);
    \node[vertex,label=below:{$\{u_0,u_1\}:0$}] at (0,-1.3) {};
  \end{scope}

  \begin{scope}[xshift=9.1cm]
    \node at (0,2.35) {(c) $S_W$ and $Q_W$};
    \draw (0,1.55)--(-0.9,0.25) (0,1.55)--(0.9,0.25);
    \draw[zeroedge] (-0.9,0.25)--(0.9,0.25);
    \node[vertex,label=above:{$v_0:0$}] at (0,1.55) {};
    \node[vertex,label=below:{$v_1:1$}] at (-0.9,0.25) {};
    \node[vertex,label=below:{$v_2:1$}] at (0.9,0.25) {};
    \draw[->,>=stealth] (0,-0.35)--(0,-0.9);
    \draw (-0.95,-1.3)--(0.95,-1.3);
    \node[vertex,label=below:{$\{v_0\}:0$}] at (-0.95,-1.3) {};
    \node[vertex,label=below:{$\{v_1,v_2\}:1$}] at (0.95,-1.3) {};
  \end{scope}
\end{tikzpicture}
\caption{The running example $G_\star=K_{2,3}$. Labels give each vertex
or contracted set and its height. Filled vertices in (a) form $B$;
open vertices form $W$. Heavy edges in (b) and (c) are the full zero
sets of the restrictions. Arrows indicate contraction and removal
of loops and repeated edges.}
\label{fig:running-example}
\end{figure}

\paragraph{The induced measures in the running example.}
Now let $f$ be uniform on $\HH(G_\star,u_0)$.
If $f(u_1)=0$, the three white heights can be chosen independently
from $\{-1,1\}$, giving eight functions. If $f(u_1)=2$ or $-2$,
all three white heights are forced, giving one function in each case.
These are the ten possible functions. Accordingly, the three rooted
functions on $S_B$, namely $(0,0)$ and $(0,\pm1)$, have weights
$8,1,1$ in Lemma~\ref{lem:marginal}.

On $S_W$, the constant-zero function has weight $2^2=4$,
because both black neighbourhoods are constant. Each of the six
nonconstant rooted lazy functions has weight one. They are obtained
by assigning $0,1$ to the three vertices, using both values, and
then subtracting the value at $v_0$. Thus the induced measure on
this triangle is not the uniform lazy measure:
\[
 \PP(g_W\equiv0)=\frac25,\qquad
 \PP(g_W=g)=\frac1{10}
 \quad\text{for each nonconstant }g\in\LL(S_W,v_0).
\]
For example, conditional on $A_{g_W}=\{v_1v_2\}$, the two functions
$(0,1,1)$ and $(0,-1,-1)$ are equally likely. They descend to the
two rooted standard functions on $Q_W\cong P_2$, illustrating
Lemma~\ref{lem:fullzero}. Conditioning only on
$g_W(v_1)=g_W(v_2)$ also admits the constant function, which then
has conditional probability $2/3$. This distinguishes a prescribed
zero edge from a prescribed full zero set.
Since every nonconstant restriction has
range one, Proposition~\ref{prop:decimation} gives
\[
 \widehat h(G_\star)
   =1+\E R(g_B)+\E R(g_W)
   =1+\frac15+\frac35=\frac95.
\]

For the next estimate, we condition only on specified edges being zero;
we impose no condition on the remaining edges.

\begin{lemma}[Common-neighbour reflection]\label{lem:reflection}
If $u$ and $v$ are distinct vertices of a connected bipartite graph with
a common neighbour, then a uniform standard height function satisfies
\begin{equation}\label{eq:reflection}
 \PP(f(u)=f(v))\ge\frac12.
\end{equation}
For every forest $F\subseteq E(S_B)$,
\begin{equation}\label{eq:bhm-forest}
 \PP(F\subseteq A_{g_B})\ge 2^{-|F|}.
\end{equation}
\end{lemma}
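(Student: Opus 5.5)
The plan is to prove \eqref{eq:reflection} with an explicit injection built by reflection, and then to get \eqref{eq:bhm-forest} by conditioning edge by edge and contracting the edges already forced to be zero.

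\emph{Single pair.} Let $w$ be a common neighbour of $u$ and $v$. Then $f(u),f(v)\in\{f(w)\pm1\}$. So $f(u)\ne f(v)$ holds exactly when $c:=f(w)$ lies strictly between $f(u)$ and $f(v)$. Let $L_c=\{x:f(x)=c\}$, and let $D_u,D_v$ be the components of $G-L_c$ containing $u$ and $v$. Along any path, consecutive heights differ by exactly one, so every path from $u$ to $v$ passes through level $c$. Hence $D_u\ne D_v$, and at most one of them contains the root $o$.

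Define $\Phi(f)$ by replacing $f$ with $2c-f$ on $D_u$ if $o\notin D_u$, and on $D_v$ otherwise. This is again a standard height function rooted at $o$:
\begin{itemize}
\item inside the reflected component, all height differences are preserved;
\item every edge leaving it ends in $L_c$, where the value $c$ is fixed by the reflection.
\end{itemize}
In $\Phi(f)$ we have $\Phi(f)(u)=\Phi(f)(v)$. The reflection fixes $L_c$ and preserves the sets $\{f>c\}$ and $\{f<c\}$ up to exchange. So $\Phi(f)$ has the same level set at $c=\Phi(f)(w)$, and the same components $D_u,D_v$. Hence $f$ is recovered from $\Phi(f)$: read off $c$ from $w$, form the components of $G-L_c$, apply the same root rule, and reflect back.

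Therefore $\Phi$ injects $\{f(u)\ne f(v)\}$ into $\{f(u)=f(v)\}$. Under the uniform measure this gives $\PP(f(u)=f(v))\ge\frac12$.

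\emph{Forests.} Each edge $e=xy\in E(S_B)$ joins distinct vertices of $B$ with a common neighbour in $W$. Moreover, $e\in A_{g_B}$ if and only if $f(x)=f(y)$. Order the edges of $F$ as $e_1,\dots,e_m$ and write
\[
 \PP(F\subseteq A_{g_B})=\prod_{i=1}^m\PP\bigl(f(x_i)=f(y_i)\,\big|\,f(x_j)=f(y_j),\ j<i\bigr).
\]
Conditioning on $f(x_j)=f(y_j)$ for $j<i$ amounts to taking a uniform standard height function on the graph $G_i$. Here $G_i$ is obtained from $G$ by identifying the endpoints of $e_1,\dots,e_{i-1}$ and deleting repeated edges. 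No loops arise, since each identified pair lies in $B$.

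The graph $G_i$ is still connected. It is bipartite, with classes given by the images of $B$ and $W$. Its standard height functions correspond bijectively, and uniformly, to those on $G$ satisfying the earlier equalities. Because $F$ is a forest, $e_i$ does not close a cycle with $e_1,\dots,e_{i-1}$. So the images of $x_i$ and $y_i$ in $G_i$ are distinct, and they still have the image of their common $W$-neighbour as a common neighbour.

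Applying \eqref{eq:reflection} in $G_i$ bounds each factor by $\frac12$, which yields \eqref{eq:bhm-forest}.

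\emph{Main obstacle.} The delicate step is verifying that $\Phi$ is injective. This rests on two facts:
\begin{itemize}
\item the reflection preserves the level set at $c$ and its complementary components, so the data used to define $\Phi$ can be read off from $\Phi(f)$;
\item the choice between $D_u$ and $D_v$ depends only on the location of $o$, which is unchanged by the reflection.
\end{itemize}
The forest step is routine once we check that the identifications keep $G_i$ simple-after-deletion, bipartite, and connected.
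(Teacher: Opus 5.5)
Your argument is correct and is essentially the paper's. After rerooting at the common neighbour $w$, as the paper does, your level set $L_c$ becomes $\{f=0\}$ and your reflection becomes a sign flip on one component of $\{|f|>0\}$. The paper states this as the component signs being independent and fair conditional on $|f|$, whereas you use an explicit injection with a root rule, which avoids rerooting but gives only the inequality rather than the exact conditional probability. Your forest step matches the paper's: condition edge by edge by identifying endpoints in $B$, and use acyclicity of $F$ to keep the endpoints distinct with a surviving common neighbour.
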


\begin{proof}
Reroot at a common neighbour $w$ and condition on the entire function
$|f|$. On each component of the subgraph induced by $\{|f|>0\}$, the
sign of $f$ is constant: opposite nonzero signs would create an edge
difference of at least two. Conversely, any assignment of signs to
these components preserves the edge conditions: at a boundary edge,
the nonzero endpoint has absolute height one. Every sign assignment
therefore gives a legal function with the prescribed absolute values.
Uniformity of the original model makes the conditional signs
independent and fair.

Both $u$ and $v$ have absolute height one. If they lie in the same nonzero
component they have equal height; otherwise they have equal height with
probability $1/2$. This proves~\eqref{eq:reflection}.

For~\eqref{eq:bhm-forest}, prescribe the equalities along the edges of
$F$ one at a time. Conditioning on previous equalities is equivalent to
identifying the corresponding vertices in $B$ in the original graph $G$.
The resulting standard model remains uniform on a connected bipartite
graph. Since $F$ is a forest, the endpoints of its next edge are still
distinct; their original common neighbour remains. Apply
\eqref{eq:reflection} at each step and multiply the conditional bounds.
Repeated edges may be removed, but distinct vertices in $W$, even those
with identical neighbourhoods, are retained throughout.
\end{proof}

\section{From zero-edge rank to expected range}\label{sec:comparison}

Let $b_m$ be the expected range of an $m$-step simple symmetric random walk
started at zero, including time zero in the range. Thus
$b_m=\widehat h(P_{m+1})$ and $b_0=0$. The comparison in this section
will be applied with two binomial parameters, first $1/2$ and later $2/3$.

\subsection{A generating-function inequality retaining the mean surplus}

\begin{lemma}\label{lem:pgf}
Let $S$ be a connected graph with $d+1$ vertices, where $d\ge1$, and
let $A$ be a random subset of its edges. Suppose that, for a fixed
$p\in[0,1]$,
\begin{equation}\label{eq:forest-assumption}
 \PP(F\subseteq A)\ge(1-p)^{|F|}
 \quad\text{for every forest }F\subseteq E(S).
\end{equation}
Put
\[
 q=\rk(A),\qquad X=d-q,\qquad
 \Delta=\E q-(1-p)d.
\]
Then $\Delta\ge0$ and, for $0<s<1$,
\begin{equation}\label{eq:pgf}
 \E s^X-(1-p+ps)^d\ge\Delta(1-s)s^{d-1}.
\end{equation}
The endpoint inequalities follow by continuity, with $0^0=1$.
\end{lemma}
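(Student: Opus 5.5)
The plan is to change variables so that the claim becomes a statement about the binomial moments of the rank $q$, and then to bound those moments using a spanning tree of $S$.

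\emph{Step 1: reformulation.} Put $x=1/s>1$. Multiplying \eqref{eq:pgf} by $s^{-d}>0$ and using $X=d-q$, the inequality becomes
\[
 \E x^{q}\ \ge\ \bigl(1+(1-p)(x-1)\bigr)^d+\Delta\,(x-1),
\]
because $s^{-d}(1-p+ps)^d=(s^{-1}(1-p)+p)^d=(1+(1-p)(x-1))^d$ and $s^{-d}\Delta(1-s)s^{d-1}=\Delta(x-1)$.

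\emph{Step 2: expansion in powers of $x-1$.} Since $q$ is a nonnegative integer, the binomial theorem gives $x^q=\sum_{k\ge0}\binom{q}{k}(x-1)^k$. Expanding the other side in the same way,
\[
 \E x^q=\sum_{k\ge0}\E\tbinom{q}{k}(x-1)^k,\qquad
 \bigl(1+(1-p)(x-1)\bigr)^d=\sum_{k=0}^d\tbinom dk(1-p)^k(x-1)^k .
\]
Compare coefficients:
\begin{itemize}
 \item For $k=0$ both coefficients equal $1$.
 \item For $k=1$ the difference of coefficients is $\E q-(1-p)d=\Delta$, which is exactly the extra term on the right.
 \item For $k\ge2$ the powers $(x-1)^k$ are positive.
\end{itemize}
Hence it suffices to prove
\[
 \E\tbinom{q}{k}\ \ge\ \tbinom dk(1-p)^k\qquad\text{for all }k\ge1 .
\]
The case $k=1$ of this inequality is precisely $\Delta\ge0$.

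\emph{Step 3: the moment bound via a spanning tree.} Fix a spanning tree $T$ of the connected graph $S$; it has $d$ edges. The edges of $A\cap T$ form a forest inside $A$, so $|A\cap T|=\rk(A\cap T)\le\rk(A)=q$. Therefore
\[
 \tbinom{q}{k}\ \ge\ \tbinom{|A\cap T|}{k}=\sum_{F\subseteq T,\ |F|=k}\one\{F\subseteq A\}.
\]
Every $F\subseteq T$ is a forest, so taking expectations and applying \eqref{eq:forest-assumption} termwise gives $\E\binom qk\ge\binom dk(1-p)^k$. This proves \eqref{eq:pgf} for $0<s<1$.

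\emph{Step 4: endpoints.} Both sides of \eqref{eq:pgf} are polynomials in $s$, with the convention $0^0=1$. The endpoint cases $s=0$ and $s=1$ therefore follow by continuity.

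The only step that needs an idea is Step 2. Expanding the probability generating function in powers of $x-1$ separates the mean surplus $\Delta$ into the linear coefficient. The higher coefficients then reduce to counting $k$-subsets of a spanning tree that lie in $A$, and this count is exactly what the forest hypothesis controls. The rest is routine bookkeeping.
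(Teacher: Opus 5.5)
Your proof is correct and is essentially the paper's argument. After multiplying by $s^d$, your basis $(x-1)^k$ with $x=1/s$ becomes exactly the paper's basis $s^{d-k}(1-s)^k$, and your Step~3 is the paper's expansion over subsets of a fixed spanning tree $T$, with the forest hypothesis applied termwise. The only difference is bookkeeping: the paper sets $Z=|A\cap E(T)|$ and bounds the excess $q-Z$ separately by the pointwise geometric-sum estimate $s^X-s^{d-Z}\ge(q-Z)(1-s)s^{d-1}$. You instead absorb that excess into a single coefficient comparison via $\binom{q}{k}\ge\binom{Z}{k}$, which also exhibits the remainder as a nonnegative combination of the terms $s^{d-k}(1-s)^k$ with $k\ge2$.
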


\begin{proof}
Fix a spanning tree $T$ of $S$ and set $Z=|A\cap E(T)|$ and $U=d-Z$.
Since $Z\le q$, we have $X\le U$ and
\[
 s^X-s^U=(1-s)\sum_{i=X}^{U-1}s^i
          \ge(q-Z)(1-s)s^{d-1}.
\]
An empty sum is zero. Expanding a product over the tree edges gives
\begin{align*}
 \E s^U-(1-p+ps)^d
 &=\sum_{F\subseteq E(T)}s^{d-|F|}(1-s)^{|F|}
     \bigl[\PP(F\subseteq A)-(1-p)^{|F|}\bigr]\\
 &\ge (1-s)s^{d-1}
       \sum_{e\in E(T)}[\PP(e\in A)-(1-p)].
\end{align*}
Every bracket is nonnegative by~\eqref{eq:forest-assumption}, so we may
retain only the singleton terms. Taking expectations in the first
display and adding the resulting inequalities proves~\eqref{eq:pgf}.
Also $\E q\ge\E Z\ge(1-p)d$.
\end{proof}

\subsection{A positive-kernel envelope for the path}

\begin{lemma}[Path range and its envelope]\label{lem:envelope}
The sequence $(b_m)$ is increasing and concave, with
\begin{equation}\label{eq:path-increments}
 b_{m+1}-b_m=2^{-m}\binom{m}{\lfloor m/2\rfloor}\quad(m\ge0).
\end{equation}
Set $B_0=0$ and, for $m\ge1$, define
\begin{equation}\label{eq:envelope}
 B_m=\alpha+\int_0^1(1-s^m)\nu(s)\,ds,\qquad
 \alpha=\frac4\pi-1,\quad
 \nu(s)=\frac{4s}{\pi(1-s^2)^{3/2}}.
\end{equation}
Then $a_m=B_m-b_m\ge0$, and more precisely
\begin{equation}\label{eq:parity-error}
 a_m=
 \begin{cases}
 0,&m=0\text{ or }m\text{ is odd},\\[2pt]
 \displaystyle\frac2\pi\int_0^1
       \frac{\sqrt{1-s}}{(1+s)^{3/2}}s^m\,ds,
       &m\ge2\text{ is even}.
 \end{cases}
\end{equation}
For $d\ge2$, write
\begin{equation}\label{eq:J}
 J_d=B_d-B_{d-1}
     =\frac4\pi\int_0^1
       \frac{s^d}{(1+s)^{3/2}\sqrt{1-s}}\,ds>0.
\end{equation}
\end{lemma}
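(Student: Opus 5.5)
The plan is to handle the path side through the classical ``new site'' decomposition, and the envelope side by integrating against $\nu$ termwise. After that, $a_m$ follows by induction on $m$, two steps at a time, from pointwise identities between integrands.

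\emph{Step 1: path increments.} For the $m$-step walk $S_0,\dots,S_m$, the range $R=\max-\min$ equals the number of distinct visited sites minus one. Time $m+1$ visits a new site exactly when the reversed walk from time $m+1$ does not return to its starting point within $m+1$ steps. By the classical identity for the simple random walk, $\PP(S_1,\dots,S_{2k}\neq0)=\PP(S_{2k}=0)=\binom{2k}{k}4^{-k}$. An odd step cannot produce a first return, so the same value holds for $2k+1$ steps. Writing $c_m=2^{-m}\binom{m}{\lfloor m/2\rfloor}$, this gives~\eqref{eq:path-increments}, and in particular $c_{2k}=c_{2k+1}$. Monotonicity of $(b_m)$ is immediate. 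Concavity amounts to $c_{m+1}\le c_m$, which holds because $c_{2k+1}=c_{2k}$ and $c_{2k+2}/c_{2k+1}=(2k+1)/(2k+2)<1$.

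\emph{Step 2: integral forms.} The substitution $t=s^2$ in the Beta integral for $\binom{2k}{k}4^{-k}$ gives
\[
 c_{2k}=c_{2k+1}=\frac2\pi\int_0^1\frac{s^{2k}}{\sqrt{1-s^2}}\,ds .
\]
For the envelope, $B_{m+1}-B_m=D_m:=\int_0^1 s^m(1-s)\nu(s)\,ds$ for $m\ge1$. Since $(1-s)/(1-s^2)^{3/2}=1/\bigl((1+s)^{3/2}\sqrt{1-s}\bigr)$, we get $D_{d-1}=J_d$ as written in~\eqref{eq:J}, and $J_d>0$ is clear. For the base cases, $a_0=0$ holds by definition. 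Next, $a_1=\alpha+\int_0^1(1-s)\nu\,ds-1$. Substituting $s=\sin\theta$ (or computing the elementary antiderivative of $s/\bigl((1+s)^{3/2}\sqrt{1-s}\bigr)$) evaluates $\int_0^1(1-s)\nu\,ds$ as $2-4/\pi$, so $a_1=0$. I expect the main friction to be here: convergence at $s=1$, since $\nu$ itself is not integrable but $(1-s^m)\nu$ is, and getting this constant exactly right.

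\emph{Step 3: parity recursion.} For $m\ge1$ we have $a_{m+1}-a_m=D_m-c_m$. Combining the integrands for $m=2k$ gives
\[
 c_{2k}-D_{2k}=\frac2\pi\int_0^1 s^{2k}\,\frac{(1-s^2)-2s(1-s)}{(1-s^2)^{3/2}}\,ds
 =\frac2\pi\int_0^1 s^{2k}\frac{\sqrt{1-s}}{(1+s)^{3/2}}\,ds ,
\]
because the numerator equals $(1-s)^2$. Hence if $a_{2k+1}=0$, then $a_{2k}$ equals the claimed expression.

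It remains to show $a_{2k+1}=0$ inductively, i.e.\ that $a_{2k}+D_{2k-1}-c_{2k-1}=0$ for $k\ge1$. By the display above, this is the integrand identity
\[
 s^{2k}\frac{\sqrt{1-s}}{(1+s)^{3/2}}+\frac{2s^{2k}(1-s)}{(1-s^2)^{3/2}}-\frac{s^{2k-2}}{\sqrt{1-s^2}}\cdot(\text{after Beta reindexing})
\]
integrating to zero. Rather than matching integrands directly, I would verify it in the form $\int_0^1 \frac{d}{ds}\bigl(s^{2k-1}\sqrt{1-s^2}\bigr)\,ds=0$. Equivalently, I would use the Beta recursion $\int s^{2k}(1-s^2)^{-1/2}=\frac{2k-1}{2k}\int s^{2k-2}(1-s^2)^{-1/2}$ to rewrite $c_{2k-1}$ with weight $s^{2k}$ and then integrate by parts. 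A cleaner alternative is to show directly that $B_{2k+1}=b_{2k+1}$ by comparing $\sum_m x^m$ generating functions: $\sum_m c_m x^m=\sqrt{(1+x)/(1-x)}$, while the envelope side is $\int_0^1 \frac{x s}{1-xs}(1-s)\nu\,ds$ plus the constant term. Extracting the odd part then reduces the claim to a single explicit integral. Nonnegativity $a_m\ge0$ then follows from the explicit formula.
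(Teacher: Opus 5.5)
There is a genuine gap in the step that carries the real content of the lemma, namely that $a_m=0$ for odd $m$. Your route for it fails because of an off-by-one error in the parity pairing.

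\textbf{The pairing error.} Since $b_{m+1}-b_m$ is the probability of no return within $m+1$ steps, the equal pairs are $c_{2k-1}=c_{2k}=\binom{2k}{k}4^{-k}$, not $c_{2k}=c_{2k+1}$. Concretely, $c_0=1\ne c_1=\tfrac12$, while $c_1=c_2=\tfrac12$ and $c_3=c_4=\tfrac38$. The error propagates through the proposal:
\begin{itemize}
\item In Step~1 the correct facts are $c_{2k-1}=c_{2k}$ and $c_{2k+1}/c_{2k}=(2k+1)/(2k+2)$. Concavity survives.
\item In Step~2 your integral $\frac2\pi\int_0^1 s^{2k}(1-s^2)^{-1/2}\,ds$ equals $c_{2k-1}$ and $c_{2k}$, not $c_{2k+1}$.
\item Your generating function $\sqrt{(1+x)/(1-x)}$ equals $1+x\sum_m c_mx^m$, not $\sum_m c_mx^m$.
\end{itemize}

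\textbf{Why Step~3 fails.} You treat $c_{2k-1}$ as carrying the weight $s^{2k-2}$ and plan to shift it with the Beta recursion. With that weight, the required identity $D_{2k-1}+D_{2k}=c_{2k-1}+c_{2k}$ reduces to $\int_0^1 s^{2k}(1-s^2)^{-1/2}\,ds=\int_0^1 s^{2k-2}(1-s^2)^{-1/2}\,ds$, which is false. The reduction ``$a_{2k}+D_{2k-1}-c_{2k-1}=0$'' also has the wrong sign. The hypothesis $a_{2k-1}=0$ gives $a_{2k}=D_{2k-1}-c_{2k-1}$. With your sign, putting everything over $\frac2\pi s^{2k}(1-s^2)^{-3/2}$, the numerator is $(1-s)^2+2(1-s)-(1-s^2)=2(1-s)^2\ne0$, even with the correct $c_{2k-1}$. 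Listing three possible verifications without carrying one out does not close the step.

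\textbf{The repair.} With the correct pairing, the odd step is a pointwise identity of integrands: for $k\ge1$,
\[
 B_{2k+1}-B_{2k-1}=\int_0^1\bigl(s^{2k-1}-s^{2k+1}\bigr)\nu(s)\,ds
 =\frac4\pi\int_0^1\frac{s^{2k}}{\sqrt{1-s^2}}\,ds
 =c_{2k-1}+c_{2k}=b_{2k+1}-b_{2k-1}.
\]
So your base value $a_1=0$ propagates to every odd index, with no Beta recursion or integration by parts. Your correct computation of $c_{2k}-D_{2k}$ then gives the even formula and $a_m\ge0$.

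\textbf{Comparison with the paper.} Once repaired, your argument is a valid alternative. The paper obtains the increments from $b_m=2\E M_m$ and a reflection argument rather than from first returns. It then writes $b_m$ in closed form as $\int_{-1}^1(1-x^m)\frac{1+x}{1-x}\rho(x)\,dx$ against the arcsine density. Splitting at $0$ gives $a_m=\int_0^1\ell(s)[s^m+(-s)^m]\rho(s)\,ds$ for all $m\ge1$ at once, which shows the parity structure without induction. Your telescoping works only with increments and is more elementary. It needs the separate constant evaluation $a_1=0$, whereas the paper's only constant evaluation is $2\int_0^1\ell\rho=\alpha$.
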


\begin{proof}
Let $(S_j)$ be the walk and $M_m=\max_{0\le j\le m}S_j$.
Symmetry gives $b_m=2\E M_m$. The maximum increases at step $m+1$
exactly when $S_m=M_m$ and the next increment is $+1$, so
$b_{m+1}-b_m=\PP(S_m=M_m)$.
Reversing the first $m$ increments identifies this event with the event
that all partial sums are nonnegative. For a nonnegative endpoint $j$
of the correct parity, reflection at the first visit to $-1$ gives the
probability
$\PP(S_m=j)-\PP(S_m=j+2)$ for nonnegative paths ending at $j$.
The sum over $j$ telescopes, giving~\eqref{eq:path-increments}.
These increments are positive and nonincreasing: the increments at
indices $2r-1$ and $2r$ agree, and the next one is smaller.

Let $\rho(x)=1/(\pi\sqrt{1-x^2})$ for $-1<x<1$.
The substitution $x=\cos\theta$ shows that the odd moments vanish and
that the $2r$-th moment equals $4^{-r}\binom{2r}{r}$.
Combining these moments with~\eqref{eq:path-increments} and summing a
finite geometric series yields
\begin{equation}\label{eq:path-integral}
 b_m=\int_{-1}^1(1-x^m)\frac{1+x}{1-x}\rho(x)\,dx.
\end{equation}
For $0<s<1$, put $L(s)=(1+s)/(1-s)$ and $\ell(s)=(1-s)/(1+s)$.
Direct calculation gives
\[
 (L-\ell)\rho=\nu,
 \qquad 2\int_0^1\ell(s)\rho(s)\,ds=\frac4\pi-1=\alpha.
\]
For the second identity, $s=\cos\theta$ reduces the integral to
$\pi^{-1}\int_0^{\pi/2}\tan^2(\theta/2)\,d\theta$ before multiplication
by two. Split~\eqref{eq:path-integral} at zero and subtract it from
\eqref{eq:envelope}. For $m\ge1$ the difference is
\[
 \int_0^1\ell(s)\bigl[s^m+(-s)^m\bigr]\rho(s)\,ds,
\]
which proves~\eqref{eq:parity-error}; $m=0$ is separate.
Finally, subtract consecutive envelope values to obtain~\eqref{eq:J}.
All integrals are finite. In particular, near $s=1$ the factor $1-s^m$
is $O(1-s)$, which makes its product with $\nu$ integrable.
\end{proof}

For $Y_{d,p}\sim\Bin(d,p)$, set
\[
 \varepsilon_{d,p}=\E a_{Y_{d,p}}.
\]
The following estimate converts a rank surplus into the required range
comparison.

\begin{proposition}\label{prop:comparison}
Under the assumptions of Lemma~\ref{lem:pgf}, if $d\ge2$, then
\begin{equation}\label{eq:range-comparison}
 \E b_X-\E b_{Y_{d,p}}
 \le\varepsilon_{d,p}-\Delta J_d-\E a_X.
\end{equation}
\end{proposition}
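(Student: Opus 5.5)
Since $b_m=B_m-a_m$, the target inequality is equivalent to
\[
 \E B_X-\E B_{Y_{d,p}}\le -\Delta J_d ,
\]
because the terms $-\E a_X+\E a_Y=-\E a_X+\varepsilon_{d,p}$ appear on both sides and cancel. So the plan is to work only with the positive-kernel envelope $B$ and use Lemma~\ref{lem:pgf}.

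The first step handles the index zero. For $m\ge1$ the envelope is $B_m=\alpha+\int_0^1(1-s^m)\nu(s)\,ds$, while $B_0=0$. Note that the formula also gives $0$ at $m=0$ with the integral term, and that the constant $\alpha$ is present only for $m\ge1$. We therefore write, for all $m\ge0$,
\[
 B_m=\alpha\,\one[m\ge1]+\int_0^1(1-s^m)\nu(s)\,ds .
\]
Taking expectations gives
\[
 \E B_X-\E B_Y=\alpha\bigl(\PP(Y=0)-\PP(X=0)\bigr)-\int_0^1\bigl(\E s^X-\E s^Y\bigr)\nu(s)\,ds .
\]
Interchanging expectation and integral is justified by positivity, or by Tonelli applied to the finitely many values of $m$, each of which is integrable.

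The second step inserts Lemma~\ref{lem:pgf}. Since $\E s^Y=(1-p+ps)^d$, the lemma gives $\E s^X-\E s^Y\ge\Delta(1-s)s^{d-1}$. Because $\nu\ge0$, the integral term is therefore at most
\[
 -\Delta\int_0^1(1-s)s^{d-1}\nu(s)\,ds .
\]
Letting $s\to0$ in~\eqref{eq:pgf} (endpoint continuity, with $0^0=1$) gives
\[
 \PP(X=0)-\PP(Y=0)\ge\Delta\cdot 0^{d-1}=0 \qquad (d\ge2).
\]
So the $\alpha$-term is nonpositive, since $\alpha>0$.

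The third step is to show that the remaining bound equals $-\Delta J_d$. This requires $\int_0^1(s^{d-1}-s^d)\nu(s)\,ds=J_d$. That identity is just the definition $J_d=B_d-B_{d-1}$ with $d-1\ge1$, so that $\alpha$ cancels. It can also be checked directly: $(1-s)\nu(s)=\tfrac{4}{\pi}\,s\,(1+s)^{-3/2}(1-s)^{-1/2}$, and multiplying by $s^{d-1}$ reproduces~\eqref{eq:J}. Combining the three steps gives $\E B_X-\E B_Y\le-\Delta J_d$, which is the claim.

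The main point requiring care is the $m=0$ discontinuity of the envelope, namely the jump by $\alpha$. I expect to handle it through the endpoint case of~\eqref{eq:pgf}. This is exactly where the hypothesis $d\ge2$ is needed: it makes the right-hand side vanish at $s=0$, and it makes $J_d$ a difference of two envelope values that both carry $\alpha$. The interchange of integrals is routine, since for every $m$ the integrand $(1-s^m)\nu$ is integrable near $s=1$.
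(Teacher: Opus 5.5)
Your proposal is correct and follows the paper's proof. Both use the unified formula $B_m=\alpha(1-0^m)+\int_0^1(1-s^m)\nu(s)\,ds$, integrate \eqref{eq:pgf} against the positive kernel $\nu$, use the endpoint $s\to0$ to show that the atom at zero contributes a nonnegative term, and finally substitute $B_m=b_m+a_m$.

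One small correction to your commentary: the hypothesis $d\ge2$ is not needed for the atom at zero. For $d=1$ the endpoint gives $\PP(X=0)-\PP(Y_{d,p}=0)\ge\Delta\ge0$ anyway. The real role of $d\ge2$ is in identifying $\int_0^1(1-s)s^{d-1}\nu(s)\,ds$ with $J_d=B_d-B_{d-1}$.
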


\begin{proof}
Use the unified expression
$B_m=\alpha(1-0^m)+\int_0^1(1-s^m)\nu(s)\,ds$, with $0^0=1$.
The variables take finitely many values and each envelope integral is
finite. Linearity of the integral therefore permits us to interchange
integration and expectation. Integrating~\eqref{eq:pgf} against the
positive kernel $\nu$ gives
\[
 \E B_{Y_{d,p}}-\E B_X\ge\Delta J_d.
\]
The atom at zero contributes a nonnegative term, since the endpoint
inequality in~\eqref{eq:pgf} gives $\PP(X=0)\ge\PP(Y_{d,p}=0)$.
Substituting $B_m=b_m+a_m$ and rearranging completes the comparison.
\end{proof}

\paragraph{Running example: zero-edge rank and the surplus.}
Return to the induced measure on the triangle $S_W$.
Every edge is zero with probability $3/5$, and any specified two-edge
forest is zero with probability $2/5$, in agreement with the lower
bounds $1/2$ and $1/4$ from Lemma~\ref{lem:reflection}.
The constant restriction has three zero edges but rank two;
each nonconstant restriction has one zero edge and rank one.
For $d=2$ and $p=1/2$, this gives
\[
 \PP(q=2)=\frac25,\qquad \PP(q=1)=\frac35,\qquad
 \E q=\frac75,\qquad \Delta=\frac25.
\]
Thus $X=K_W-1=2-q$ is zero with probability $2/5$ and one otherwise.
The generating-function comparison can be read off exactly:
\[
 \E s^X-\left(\frac{1+s}{2}\right)^2
   =\frac25(1-s)s+\frac3{20}(1-s)^2
   \ge\Delta(1-s)s\qquad(0\le s\le1).
\]
This example also makes the parity correction visible. Since $X$
only takes the values zero and one, $a_X=0$, whereas $Y_{2,1/2}$
can equal two. In particular,
\[
 \E b_X=\frac35<\frac78=\E b_{Y_{2,1/2}},\qquad
 \varepsilon_{2,1/2}=\frac{a_2}{4}=\frac2\pi-\frac58.
\]
Using $J_2=8/\pi-2$, the upper bound in
Proposition~\ref{prop:comparison} is
$\varepsilon_{2,1/2}-\Delta J_2=7/40-6/(5\pi)<0$.
The rank surplus therefore more than compensates for the positive
parity error in this comparison.

\subsection{Bounding the parity error}

\begin{lemma}\label{lem:budget}
For $d\ge2$ and $1/2\le p\le2/3$,
\begin{equation}\label{eq:general-budget}
 \frac{\varepsilon_{d,p}}{J_d}
 <\frac{1}{8p^{3/2}}
   \frac{\Gamma(d-1/2)\Gamma(d+3/2)}{\Gamma(d+1)^2}.
\end{equation}
In particular,
\begin{equation}\label{eq:half-budget}
 \varepsilon_{d,1/2}<\frac{J_d}{2d}\qquad(d\ge2).
\end{equation}
\end{lemma}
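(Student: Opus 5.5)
The plan is to write both $\varepsilon_{d,p}$ and $J_d$ as integrals over $s\in(0,1)$ and compare the integrands after a change of variables. The binomial moments then become Beta integrals, which produce the Gamma factors.

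\textbf{Step 1: an integral formula for $\varepsilon_{d,p}$.} By \eqref{eq:parity-error}, $a_m=\frac2\pi\int_0^1\frac{\sqrt{1-s}}{(1+s)^{3/2}}\,s^m\,\one\{m\ge2\text{ even}\}\,ds$. Averaging over $Y=Y_{d,p}$ and using the even-part identity $\E[s^Y\one\{Y\text{ even}\}]=\tfrac12\bigl[(1-p+ps)^d+(1-p-ps)^d\bigr]$ gives
\[
 \varepsilon_{d,p}=\frac1\pi\int_0^1\frac{\sqrt{1-s}}{(1+s)^{3/2}}
 \Bigl[(1-p+ps)^d+(1-p-ps)^d-2(1-p)^d\Bigr]ds .
\]

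\textbf{Step 2: upper bound for $\varepsilon_{d,p}$.} For $p\ge1/2$ we have $|1-p-ps|\le\max(1-p,\,ps-(1-p))\le 1-p+ps$, with the term $-2(1-p)^d$ absorbing the part where $1-p-ps\ge0$. So I would bound the bracket by a constant multiple of $(1-p+ps)^d$, possibly retaining the $-2(1-p)^d$ correction when strictness is needed. Then substitute $t=1-p+ps$, so that $1-s=(1-t)/p$ and $ds=dt/p$; this is where the factor $p^{-3/2}$ comes from. Together with $(1+s)^{-3/2}\le1$, this bounds $\varepsilon_{d,p}$ by roughly $\frac{c}{\pi p^{3/2}}\int_{1-p}^1 t^d\sqrt{1-t}\,dt\le\frac{c}{\pi p^{3/2}}B(d+1,\tfrac32)$.

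\textbf{Step 3: lower bound for $J_d$.} From \eqref{eq:J}, $(1+s)^{-3/2}\ge 2^{-3/2}$ gives $J_d\ge\frac{4}{\pi}2^{-3/2}B(d+1,\tfrac12)$. A sharper lower bound is available by keeping the weight $(1+s)^{-3/2}$ and writing it, near $s=1$, against $s^{d}$ with a shifted exponent. That device is what should yield $\Gamma(d-\tfrac12)$ rather than $\Gamma(d+\tfrac12)$.

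\textbf{Step 4: combine.} Taking the ratio of the Beta functions gives a bound of the form $C p^{-3/2}\,\Gamma(d+1)\Gamma(d+\cdot)/\ldots$. I expect the main obstacle to be tuning the elementary inequalities in Steps 2–3, in particular how the factors $\sqrt{1-s}$ and $(1+s)^{-3/2}$ are split between the two integrals. The goal is to land exactly on the constant $\frac18$ and the precise Gamma quotient in \eqref{eq:general-budget}. What I would try first is to pair the integrands pointwise after the substitution $t=1-p+ps$ in $\varepsilon$ only. That means comparing $p^{-3/2}\,t^d(1-t)^{1/2}$ with $s^d(1-s)^{-1/2}$, and reducing both sides to Beta integrals via Gautschi-type bounds, keeping the strict inequality from the $-2(1-p)^d$ term.

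\textbf{Step 5: the case $p=1/2$, \eqref{eq:half-budget}.} Here $8p^{3/2}=2\sqrt2$, so it suffices to know that $d\,\Gamma(d-\frac12)\Gamma(d+\frac32)/\Gamma(d+1)^2<\sqrt2$. This quantity decreases to $1$ as $d\to\infty$. That follows from log-convexity of $\Gamma$: consecutive ratios can be written explicitly via $\Gamma(x+1)=x\Gamma(x)$, so the claim holds for all $d\ge d_0$ with small $d_0$. However, at $d=2$ the quantity is about $1.47>\sqrt2$. So the smallest cases, certainly $d=2$ and whichever further $d<d_0$ fail, must be checked directly from the explicit formulas. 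For $d=2$, $\varepsilon_{2,1/2}=\frac2\pi-\frac58$ and $J_2=\frac8\pi-2$ from the running example, and indeed $\varepsilon_{2,1/2}<J_2/4$. For the remaining small $d$ one computes $a_2,a_4,\dots$ and $J_d$ exactly as rational combinations of $1$ and $1/\pi$.
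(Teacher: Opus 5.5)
Your Steps~1 and~5 match the paper, including the direct check at $d=2$. Only $d=2$ needs that check, since $T_3=105\pi/256<\sqrt2$.

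The gap is in Steps~2--4. They never produce \eqref{eq:general-budget}, and the concrete estimate you write down cannot produce it. Bounding $(1+s)^{-3/2}\le1$ in the $\varepsilon$-integral loses a factor $2^{3/2}$ precisely near $s=1$, which is where $(1-p+ps)^d$ concentrates. Take the best case, with the bracket bounded by $(1-p+ps)^d$ itself. Your Steps~2 and~3 then give only $\varepsilon_{d,p}/J_d\le 1/\bigl(2\sqrt2\,p^{3/2}(d+\frac32)\bigr)$. At $p=\frac12$ this is $1/(d+\frac32)$, which exceeds $1/(2d)$ for every $d\ge2$. Nothing elsewhere can absorb this loss, because \eqref{eq:general-budget} is asymptotically sharp: both $\varepsilon_{d,p}/J_d$ and its right-hand side behave like $1/(8p^{3/2}d)$. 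Your idea of sharpening $J_d$ with a shifted exponent is misdirected. The crude bound $J_d\ge\frac{\sqrt2}{\pi}\mathrm B(d+1,\frac12)$ from your Step~3 is exactly what is needed.

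The missing idea is to keep the weight inside the $\varepsilon$-integral. With $t=1-p+ps$ one has $1-s=(1-t)/p$ and $1+s=(t+2p-1)/p$, so all powers of $p$ cancel:
\[
 \frac{\sqrt{1-s}}{(1+s)^{3/2}}\,ds=\frac{\sqrt{1-t}}{(t+2p-1)^{3/2}}\,dt .
\]
Next, $t+2p-1\ge 2pt$ because $(2p-1)(1-t)\ge0$; this is where $p\ge\frac12$ is used. This single inequality produces both the factor $(2p)^{-3/2}$ and the shifted power $t^{d-3/2}$, giving
\[
 \varepsilon_{d,p}<\frac{\mathrm B(d-\frac12,\frac32)}{\pi(2p)^{3/2}}.
\]
Divide by the crude bound for $J_d$ and use $\Gamma(\frac32)/\Gamma(\frac12)=\frac12$. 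The result is exactly the right side of \eqref{eq:general-budget}. So $\Gamma(d-\frac12)$ comes from the upper bound on $\varepsilon$, not from $J_d$.

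You also need the bracket bounded by $(1-p+ps)^d$ itself, not by a multiple of it. Where $1-p-ps<0$, your estimate gives only $|1-p-ps|\le1-p+ps$. For even $d$ that bounds the bracket merely by $2(1-p+ps)^d$, which doubles the constant. The hypothesis $p\le\frac23$, which you never use, gives $|1-p-ps|\le 2p-1\le 1-p$ on that region. Hence $(1-p-ps)^d-2(1-p)^d<0$ pointwise, which yields both the constant $\frac18$ and the strict inequality.
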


\begin{proof}
Put $v=1-p+ps$ and $u=1-p-ps$. Retaining the terms with positive even indices in the
binomial expansion in~\eqref{eq:parity-error} yields
\begin{equation}\label{eq:epsilon-integral}
 \varepsilon_{d,p}=\frac1\pi\int_0^1
 \frac{\sqrt{1-s}}{(1+s)^{3/2}}
 \bigl[v^d+u^d-2(1-p)^d\bigr]\,ds.
\end{equation}
The subtraction removes the term at time zero, where $a_0=0$.
Our restriction on $p$ ensures $|u|\le1-p$ for $0\le s\le1$.
Thus $u^d-2(1-p)^d<0$. Discard this negative term and substitute $t=v$:
\begin{align*}
 \varepsilon_{d,p}
 &<\frac1\pi\int_{1-p}^1
       \frac{\sqrt{1-t}}{(t+2p-1)^{3/2}}t^d\,dt\\
 &\le\frac{1}{\pi(2p)^{3/2}}
       \int_0^1(1-t)^{1/2}t^{d-3/2}\,dt
  =\frac{\mathrm B(d-1/2,3/2)}{\pi(2p)^{3/2}}.
\end{align*}
Here $t+2p-1\ge2pt$, since $(2p-1)(1-t)\ge0$.
Equation~\eqref{eq:J} also gives
\[
 J_d\ge\frac{\sqrt2}{\pi}\,\mathrm B(d+1,1/2).
\]
Taking the ratio of these bounds and using the beta identity
$\mathrm B(x,y)=\Gamma(x)\Gamma(y)/\Gamma(x+y)$ gives
\eqref{eq:general-budget}.

Define
\begin{equation}\label{eq:T}
 T_d=d\frac{\Gamma(d-1/2)\Gamma(d+3/2)}{\Gamma(d+1)^2}.
\end{equation}
The gamma recurrence gives
\begin{equation}\label{eq:T-recurrence}
 \frac{T_{d+1}}{T_d}=1-\frac{3}{4d(d+1)}<1,
 \qquad T_3=\frac{105\pi}{256}.
\end{equation}
Since $\pi<22/7$, we have $T_3<165/128<\sqrt2$.
For $p=1/2$ and $d\ge3$,~\eqref{eq:general-budget} therefore gives
$d\varepsilon_{d,1/2}/J_d<T_d/(2\sqrt2)<1/2$.
At $d=2$, direct evaluation gives $B_1=1$, $B_2=8/\pi-1$ and
$b_2=3/2$. The remaining case follows from
\[
 \varepsilon_{2,1/2}=\frac2\pi-\frac58
 <\frac2\pi-\frac12=\frac{J_2}{4}.\qedhere
\]
\end{proof}

\section{Proof of the BHM conjecture}\label{sec:bhmproof}

Fix one auxiliary graph $S=S_B$ from Section~\ref{sec:decimation}, and
write $g=g_B$, $d=|B|-1$, $A=A_g$, and
\begin{equation}\label{eq:bhm-rank-notation}
 q=\rk_S(A),\qquad X=d-q=K_B-1,\qquad
 \Delta=\E q-\frac d2.
\end{equation}
Lemma~\ref{lem:reflection} supplies the forest hypothesis of
Lemma~\ref{lem:pgf} with $p=1/2$. If $S$ has a cycle, we shall prove
\begin{equation}\label{eq:bhm-surplus-target}
 \Delta\ge\frac{1}{2d}.
\end{equation}
Together, Proposition~\ref{prop:comparison} and Lemma~\ref{lem:budget}
will then give $\E b_X<\E b_{Y_{d,1/2}}$. When $S$ is a tree, a direct
comparison of its independent edge increments suffices.

\subsection{Boundary values for a weighted lazy model}

When $S$ is bipartite, every vertex $w\in W$ has at most two neighbours
in $B$: three neighbours would form a triangle in $S$. Neighbourhoods of
size one contribute a constant factor of two to~\eqref{eq:marginal}.
For $e=uv\in E(S)$, let $t_e$ be the number of vertices $w\in W$ with
$N_G(w)=\{u,v\}$. Thus the marginal weight has the exact form
\begin{equation}\label{eq:edge-model}
 \PP(g)\ \propto\ \prod_{e=uv\in E(S)}w_e(g(u)-g(v)),\qquad
 w_e(k)=
 \begin{cases}
 \lambda_e=2^{t_e}\ge2,&k=0,\\
 1,&k=\pm1,\\
 0,&|k|>1.
 \end{cases}
\end{equation}
The following boundary inequality will control the probability of an
even cycle having no zero edges.

\begin{lemma}[Constant boundary values maximize the extension weight]
\label{lem:boundary}
Let $H$ be a finite simple graph, possibly disconnected, and let
$\varnothing\ne U\subseteq V(H)$ meet every connected component. For each edge $e$,
let $w_e(0)=\lambda_e\ge2$, $w_e(\pm1)=1$, and $w_e(k)=0$ for
$|k|>1$. For prescribed integer boundary values $h:U\to\Z$, set
\[
 Z_H(h)=\sum_{x:V(H)\to\Z,\ x|_U=h}
             \prod_{e=uv\in E(H)}w_e(x_u-x_v).
\]
Then
\begin{equation}\label{eq:boundary}
 Z_H(h)\le Z_H(0).
\end{equation}
\end{lemma}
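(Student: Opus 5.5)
The plan is to prove \eqref{eq:boundary} by a finite Fourier argument. The key observation is that each edge weight is a positive-definite function: on $\Z$, or on a cyclic group $\Z_M$ with $M\ge3$, the transform of $w_e$ is
\[
 \widehat w_e(\xi)=\lambda_e+2\cos(2\pi\xi/M)\ \ge\ \lambda_e-2\ \ge0,
\]
which is exactly where the hypothesis $\lambda_e\ge2$ enters. Once $Z_H(h)$ is written as a Fourier sum with nonnegative coefficients in which the boundary data $h$ appear only through unimodular phases, the triangle inequality gives $|Z_H(h)|\le Z_H(0)$ at once.

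\textbf{Step 1: reduction to a cyclic group.} This step avoids summing over infinitely many configurations. Since $U$ meets every component, every vertex lies within distance $|V(H)|$ of $U$, so each admissible $x$ takes values in $[-\|h\|_\infty-|V(H)|,\ \|h\|_\infty+|V(H)|]$. In particular $Z_H(h)$ is a finite sum. Fix $M$ larger than $4(\|h\|_\infty+|V(H)|)$. Define $Z^{(M)}_H(\bar h)$ by the same formula with heights in $\Z_M$, where the weight $w_e$ is read on the representative of the difference in $(-M/2,M/2]$. I will show that reduction mod $M$ is a bijection from integer configurations with nonzero weight onto $\Z_M$ configurations with nonzero weight, so $Z_H(h)=Z^{(M)}_H(h\bmod M)$.
\begin{itemize}
\item Injectivity follows from the a priori bound on the values.
\item For surjectivity, lift a $\Z_M$ configuration along paths from a vertex of $U$ in each component, using the increments in $\{-1,0,1\}$. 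Around any cycle the increment sum has absolute value less than $M$ and is $\equiv0$, hence it is $0$, so the lift is well defined.
\item At a second boundary vertex $u'$, the lifted value lies within $|V(H)|$ of $h_u$ and is $\equiv h_{u'}\pmod M$. Since $M$ exceeds $2(\|h\|_\infty+|V(H)|)$, it equals $h_{u'}$.
\end{itemize}
The same holds for $h=0$.

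\textbf{Step 2: Fourier expansion.} Write $w_e(k)=M^{-1}\sum_{\xi\in\Z_M}\widehat w_e(\xi)e^{2\pi i k\xi/M}$, orient each edge, and expand the product over edges. The sum over each interior vertex $v\notin U$ of $e^{2\pi i x_v(\operatorname{div}\xi)_v/M}$ equals $M$ times the indicator that $(\operatorname{div}\xi)_v=0$. This gives
\[
 Z_H(h)=M^{|V\setminus U|-|E|}\sum_{\substack{\xi\in\Z_M^{E}\\ (\operatorname{div}\xi)_v=0\ (v\notin U)}}\ \prod_{e}\widehat w_e(\xi_e)\;\exp\Bigl(\frac{2\pi i}{M}\sum_{u\in U}h_u(\operatorname{div}\xi)_u\Bigr).
\]
Every product of transforms is nonnegative and every phase has modulus one. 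Hence $Z_H(h)\le|Z_H(h)|\le Z_H(0)$, because at $h=0$ all phases equal $1$.

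\textbf{Main obstacle.} The delicate point is Step 1. The plan is to reduce to the cyclic group carefully, so that no lazy configuration is created or lost modulo $M$, including the consistency between different boundary vertices. Once that bijection is checked, Step 2 is routine bookkeeping.
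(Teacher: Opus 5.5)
Your proposal is correct and is essentially the paper's proof. Both arguments periodize to a cyclic group and expand in Fourier modes with nonnegative coefficients $\lambda_e+2\cos(2\pi\xi/M)$, where $h$ enters only through phases at the boundary divergences, and then apply the triangle inequality; both also identify integer and cyclic configurations by lifting $\{-1,0,1\}$ increments, using that cycle sums vanish and boundary lifts are forced. The differences are cosmetic: the paper takes an odd modulus $N>2|V(H)|+2D+2$ with $D=\max_U h-\min_U h$, which also makes the coefficients strictly positive, whereas your $M>4(\|h\|_\infty+|V(H)|)$ with merely nonnegative coefficients works equally well.
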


\begin{proof}
Every vertex is connected to the prescribed boundary. The edge
constraints therefore bound all internal heights, so only finitely
many assignments have nonzero weight. Let $M=|V(H)|$ and
$D=\max_U h-\min_U h$. Choose an odd integer $N>2M+2D+2$ and
periodize each edge weight on $\Z/N\Z$. If
$\zeta=\exp(2\pi i/N)$, its Fourier coefficients are
\[
 W_e(j)=\lambda_e+2\cos(2\pi j/N)>0,
 \qquad
 w_{e,N}(k)=\frac1N\sum_{j\in\Z/N\Z}W_e(j)\zeta^{jk}.
\]
Orient all edges and define the divergence of an edge-frequency
assignment $\eta$ as the outgoing sum minus the incoming sum.
Writing $I=V(H)\setminus U$, the boundary partition function for this
group-valued model has the following finite expansion, obtained by
Fourier inversion and summation over the internal vertex values:
\begin{equation}\label{eq:fourier-boundary}
 Z_{H,N}(h)=N^{|I|-|E(H)|}
 \sum_{\substack{\eta\in(\Z/N\Z)^{E(H)}\\
                  \diver\eta|_I=0}}
       \left(\prod_e W_e(\eta_e)\right)
       \zeta^{\sum_{u\in U}h_u\diver\eta(u)}.
\end{equation}
The factor $N^{|I|-|E(H)|}$ accounts for one factor $N^{-1}$ per edge
and one factor $N$ per internal vertex. All coefficients are nonnegative,
and the partition function itself is nonnegative. The triangle
inequality therefore gives
$Z_{H,N}(h)\le Z_{H,N}(0)$.

To transfer this inequality to integer heights, we identify the
group-valued and integer-valued configurations. In a group-valued assignment of positive weight, each oriented
edge difference has a unique representative in $\{-1,0,1\}$.
The sum of these representatives around any simple cycle is a multiple
of $N$ and has absolute value at most the cycle length, which is less
than $N$. It is therefore zero. Decomposing closed walks into simple
cycles and backtracks shows that these increments can be integrated
to integer heights on each component.

Fix one boundary vertex in a component at its prescribed integer height.
The lift is then unique. For any other boundary vertex $v$, its lifted
height $\widetilde h_v$ differs from $h_v$ by a multiple of $N$; along a
simple path from the chosen boundary vertex,
\[
 |\widetilde h_v-h_v|\le M-1+D<N.
\]
Thus the difference is zero. This establishes a weight-preserving
bijection, whose inverse is reduction modulo $N$. The same construction
applies to the zero boundary. Consequently, $Z_{H,N}(h)=Z_H(h)$ and
$Z_{H,N}(0)=Z_H(0)$, proving~\eqref{eq:boundary}.
\end{proof}

\subsection{A cycle supplies a rank surplus}

The next lemma allows us to extend a rank estimate on one cycle to
the whole graph, without any independence assumption.

\begin{lemma}[Extending a local forest]\label{lem:extend}
Let $S$ be connected and let $C$ be a connected subgraph with vertex set
$U$. Contract all of $U$ to one vertex, choose a spanning tree of the
resulting quotient, and choose one original representative for each tree
edge. Denote this fixed set of representatives by $F$. Then
$|F|=|V(S)|-|U|$, and for every $A\subseteq E(S)$,
\begin{equation}\label{eq:extend}
 \rk_S(A)\ge\rk_C(A\cap E(C))+|A\cap F|.
\end{equation}
\end{lemma}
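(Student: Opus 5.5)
The plan is to pass to the quotient by $U$ and use a submodularity (matroid contraction) argument for graphic rank. First, the count $|F|=|V(S)|-|U|$: the quotient $S/U$ is connected because $S$ is, and it has $|V(S)|-|U|+1$ vertices. Hence any spanning tree of it has $|V(S)|-|U|$ edges, and choosing one representative per tree edge is injective. Each representative is an edge of $S$ not contained in $E(C)$; representatives of distinct tree edges are distinct, since distinct quotient edges have distinct endpoint pairs.

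For \eqref{eq:extend}, the cleanest route is the characterization of graphic rank as the size of a maximal forest. Let $A_C=A\cap E(C)$ and $A_F=A\cap F$. Choose a spanning forest $\Phi$ of $(U,A_C)$, so $|\Phi|=\rk_C(A_C)$, since $C$ has vertex set $U$ and $\rk_C$ counts $|U|$ minus components. Next I will show that $\Phi\cup A_F$ is a forest in $S$. Then $\rk_S(A)\ge\rk_S(\Phi\cup A_F)=|\Phi|+|A_F|$, by monotonicity of rank and the fact that a forest's rank equals its size, which gives \eqref{eq:extend}.

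The only real step is acyclicity of $\Phi\cup A_F$. Suppose a cycle $Z$ lies in $\Phi\cup A_F$. It cannot lie entirely in $\Phi$, so it uses some edge of $A_F$. Contract $U$ to a point. Every edge of $\Phi$ becomes a loop, and each $F$-edge maps to its distinct tree edge. The image of $Z$ is therefore a closed walk in the spanning tree using at least one tree edge. Now trace the cyclic sequence of $F$-edges of $Z$ in order. Between consecutive $F$-edges the cycle stays either inside $U$ (collapsing to the contracted vertex) or runs along a single $F$-edge. An $F$-edge with an endpoint outside $U$ that is traversed once yields a closed walk in a tree where that edge is used an odd number of times, since $Z$ is a simple cycle and each $F$-edge appears once. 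That is impossible: in a tree, every closed walk uses each edge an even number of times. This parity argument is the expected main subtlety. It also covers $F$-edges with both endpoints in $U$, which can occur only if $U$ is not induced. Their images are loops, and no spanning tree contains a loop, so no tree edge has such a representative. Hence $F\cap E(S[U])=\varnothing$ in the image sense, and the odd-usage contradiction applies.

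Alternatively, I could phrase the argument via submodularity: $\rk_S(A)\ge\rk_S(A_C)+\rk_{S/U}(A_F)$, where $\rk_S(A_C)=\rk_C(A_C)$ and the contraction rank is $|A_F|$ because $A_F$ maps injectively into a tree. This is the standard matroid contraction inequality $\rk(X\cup Y)\ge\rk(X)+\rk_{/E(C)}(Y)$ with contraction of a connected edge set. I would present the forest argument as primary and mention this as a remark.
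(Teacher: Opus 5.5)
Your proposal is correct and follows the paper's route: both arguments place the forest consisting of a maximal forest of $A\cap E(C)$ together with $A\cap F$ inside $A$. The only difference is how you check acyclicity. The paper extends that maximal forest to a spanning tree $T_C$ of $C$ and notes that $T_C\cup F$ is connected with $|V(S)|-1$ edges, hence a spanning tree of $S$; this is a little quicker than your closed-walk parity argument in the quotient tree, which is also valid.
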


\begin{proof}
Take a maximal forest $J$ of $A\cap E(C)$ and extend it to a spanning
tree $T_C$ of $C$. The graph $T_C\cup F$ is connected with
$|V(S)|-1$ edges, so it is a spanning tree of $S$. Its subgraph
$J\cup(A\cap F)$ is a forest contained in $A$, proving
\eqref{eq:extend}. Crucially, $F$ is fixed before $A$ is sampled.
\end{proof}

\begin{proposition}[Rank surplus in an auxiliary graph]\label{prop:bhm-surplus}
For the single-class marginal~\eqref{eq:marginal}, if $S$ contains a
cycle, then $d\ge2$ and $\Delta\ge1/(2d)$.
\end{proposition}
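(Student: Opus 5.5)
The plan is to localize the surplus on a single cycle and spread it to the rest of $S$ with Lemma~\ref{lem:extend}. Choose a shortest cycle $C$ of $S$, of length $L\ge3$, and let $U=V(C)$. Then $d=|V(S)|-1\ge L-1\ge2$, which gives the first assertion. Let $F$ be the fixed forest of Lemma~\ref{lem:extend}, with $|F|=d+1-L$. Since $F$ is a forest, Lemma~\ref{lem:reflection} gives $\PP(e\in A)\ge1/2$ for each $e\in F$, so $\E|A\cap F|\ge(d+1-L)/2$. Taking expectations in~\eqref{eq:extend}, it then suffices to prove the cycle estimate
\[
 \E\rk_C(A\cap E(C))\ \ge\ \frac{L-1}{2}+\frac1{2d}.
\]
Since $d\ge L-1$, it is enough to aim for a surplus of order $1/L$ on $C$, for instance $\tfrac1{2L}$, provided the bookkeeping closes.

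For the cycle I will use a pointwise averaging identity. Let $k=|A\cap E(C)|$. Then $\rk_C=k-\one[k=L]$. Averaging over the $L$ spanning paths $C-e$ gives, pointwise,
\[
 \rk_C\ \ge\ \frac{L-1}{L}\,k+\frac1L\,\one[1\le k\le L-1].
\]
Each single edge is zero with probability at least $1/2$ by~\eqref{eq:reflection}, so $\E k\ge L/2$. Hence $\E\rk_C\ge(L-1)/2+\tfrac1L\PP(1\le k\le L-1)$. It remains to show that the cycle is ``mixed'' (neither all zero nor all nonzero) with probability large enough, roughly $\PP(1\le k\le L-1)\ge L/(2d)$. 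Where that fails, I will instead use the alternative bound $\E\rk_C\ge\E k-\PP(k=L)$, combined with the surplus $\PP(e\in A)-\tfrac12$ on the edges of $C$.

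The argument then splits according to the parity of the cycle.

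\emph{Odd cycle.} A lazy height function cannot change by $\pm1$ along every edge of an odd cycle, so $k\ge1$ pointwise. The event $\{k=0\}$ is therefore empty, and only the event that all of $C$ is zero must be controlled. I would bound its probability by conditioning on $g$ off the cycle. This is the pointwise rank inequality mentioned in the introduction.

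\emph{$S$ bipartite.} Here every cycle is even and the marginal has the edge form~\eqref{eq:edge-model} with all $\lambda_e\ge2$. To bound $\PP(k=0)$, I will compare the nowhere-zero configurations on $C$ with constant ones using Lemma~\ref{lem:boundary}. Fix the values of $g$ on $C$ and sum over extensions to $H=S-E(C)$, with boundary $U$. By Lemma~\ref{lem:boundary}, a nonconstant boundary pattern (as forced when $k=0$) has extension weight at most that of the constant boundary. The constant pattern also carries the extra factor $\prod_{e\in C}\lambda_e\ge2^L$ on the cycle edges. Counting the at most $2^L$ sign patterns of a nowhere-zero cycle then gives $\PP(k=0)\le\PP(k=L)$, and in fact something stronger.

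Feeding this back into the two rank bounds should give a surplus of order $1/L$, hence at least $1/(2d)$. The bipartite (even-cycle) case is the main obstacle. There I must extract a quantitative lower bound on $\PP(1\le k\le L-1)$, or equivalently a strict gap in $\E k$, rather than only the comparison $\PP(k=0)\le\PP(k=L)$. I expect to need the factor $2^L$ from the zero weights against the $2^L$ sign patterns, together with the requirement that a nowhere-zero even cycle have equally many up and down steps, to obtain slack proportional to $1/L$.
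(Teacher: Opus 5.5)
\textbf{There is a genuine gap: the step that turns the cycle information into a surplus is missing, and the two ways you propose to fill it cannot work.}

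Your architecture matches the paper's. You localize on a cycle and extend with the fixed forest of Lemma~\ref{lem:extend}, using only the marginal bound $\frac12$. You then split by parity. In the bipartite case you combine Lemma~\ref{lem:boundary} with the count of $\binom{L}{L/2}$ nowhere-zero rooted patterns, which gives $\PP(k=0)\le\binom{L}{L/2}2^{-L}\PP(k=L)\le\frac12\PP(k=L)$.

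You propose either a lower bound on $\PP(1\le k\le L-1)$ or an upper bound on $\PP(k=L)$ obtained by conditioning off the cycle. Neither exists in general, because $\PP(k=L)$ can tend to one.
\begin{itemize}
\item Take $G=K_{3,m}$ with $|B|=3$. Then $S_B$ is a triangle, so $d=L-1=2$. The constant restriction has weight $2^m$ and each of the six nonconstant rooted ones has weight one, so $\PP(k=3)=2^m/(2^m+6)\to1$. Your averaging bound, used with $\E k\ge L/2$, then gives only $1+\frac13\PP(k=1)\to1$. The target is $\frac{L-1}{2}+\frac1{2d}=\frac54$.
\item In the bipartite case, take a $4$-cycle $b_1b_2b_3b_4$ and attach $m$ white vertices adjacent exactly to $b_i,b_{i+1}$ for each edge. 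This gives $\lambda_e=2^m$ and the same failure.
\end{itemize}
In such examples the entire surplus lies in the excess of $\E k$ over $L/2$. Your fallback $\E\rk_C=\E k-\PP(k=L)$ is the right starting point, but you give no mechanism linking $\PP(k=L)$ to that excess.

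\textbf{The missing idea is a pointwise linear certificate that uses only the parity of $k$.}
\begin{itemize}
\item \emph{Odd cycle.} Since $k\ge1$, you have $\one_{\{k=L\}}\le(k-1)/(L-1)$. Hence $\rk_C\ge\frac{(L-2)k+1}{L-1}$ for every configuration. The bound $\E k\ge L/2$ alone then yields $\E\rk_C\ge\frac{L-1}2+\frac1{2(L-1)}$. The all-zero probability never needs to be bounded.
\item \emph{Even cycle.} Here $k$ is even, so $k\ge2$ on the mixed event. The inequality $(L-3)\rk_C\ge(L-4)k+2+\one_{\{k=L\}}-2\one_{\{k=0\}}$ holds pointwise, with equality at $k\in\{0,L\}$ and slack $k-2$ otherwise. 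After taking expectations, the indicator terms contribute $\PP(k=L)-2\PP(k=0)\ge0$ by your boundary comparison. This gives $\E\rk_C\ge\frac{L-1}2+\frac1{2(L-3)}$.
\end{itemize}

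These are the surpluses you actually need. Your suggested target $\frac1{2L}$ is too weak when $V(C)=V(S)$: then $d=L-1$ and $\frac1{2L}<\frac1{2d}$.

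Finally, if you keep a shortest cycle, your case split omits an even shortest cycle in a nonbipartite $S$. That case is harmless. Such an $S$ is triangle-free, which already forces $|N_G(w)|\le2$ and hence the edge factorization. Alternatively, choose an odd cycle whenever one exists, as the paper does.
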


\begin{proof}
Every edge of $S$ is zero with probability at least $1/2$ by
Lemma~\ref{lem:reflection}. Suppose first that $S$ has an odd cycle $C$
of length $\ell\ge3$. Let $Z_C=|A\cap E(C)|$ and
$q_C=\rk_C(A\cap E(C))$. The number of nonzero increments around
a closed integer-height cycle is even. Hence $Z_C$ is positive and odd,
and
\[
 q_C=Z_C-\one_{\{Z_C=\ell\}}.
\]
For every configuration,
\begin{equation}\label{eq:odd-certificate}
 q_C\ge\frac{(\ell-2)Z_C+1}{\ell-1}.
\end{equation}
Indeed, equality holds for $Z_C=\ell$, and otherwise the difference,
after multiplication by $\ell-1$, is $Z_C-1\ge0$.
Since $\E Z_C\ge\ell/2$, this yields
\begin{equation}\label{eq:odd-rank}
 \E q_C\ge\frac{\ell-1}{2}+\frac1{2(\ell-1)}.
\end{equation}

It remains to consider the case where $S$ is bipartite and has a cycle.
The edge model
\eqref{eq:edge-model} applies. Choose an even cycle $C$ of length
$\ell\ge4$, remove its edges, and take $V(C)$ as the boundary of the
remaining graph $H$. Each component of $H$ meets this boundary:
a shortest path to $C$ uses no cycle edge before reaching $C$.
Since the weights depend only on height differences, we may reroot
at a vertex of $C$. Let
\[
 P_0=\PP(Z_C=\ell),\qquad P_*=\PP(Z_C=0).
\]
The unnormalized weight of the all-zero event is
$\bigl(\prod_{e\in E(C)}\lambda_e\bigr) Z_H(0)$.
On the event of no zero edges, the cycle has
$\binom{\ell}{\ell/2}$ possible rooted height assignments, each with
cycle-edge weight one. Lemma~\ref{lem:boundary} bounds each extension
weight by $Z_H(0)$. Comparing the two unnormalized event weights gives
\begin{equation}\label{eq:cycle-events}
 P_*\le\frac{\binom{\ell}{\ell/2}}{\prod_{e\in E(C)}\lambda_e}P_0
      \le\frac12 P_0.
\end{equation}
The last bound uses $\lambda_e\ge2$ and the fact that
$4^{-r}\binom{2r}{r}$ decreases from $1/2$ at $r=1$.

Here $Z_C$ is even. If $0<Z_C<\ell$, then $Z_C\ge2$ and $q_C=Z_C$.
Together with the cases $Z_C=0,\ell$, this gives the pointwise inequality
\begin{equation}\label{eq:even-certificate}
 q_C\ge
 \frac{(\ell-4)Z_C+2+\one_{\{Z_C=\ell\}}
                        -2\one_{\{Z_C=0\}}}{\ell-3}.
\end{equation}
At the two endpoints this is equality; otherwise the difference after
multiplication by $\ell-3$ is $Z_C-2$.
Taking expectations, applying~\eqref{eq:cycle-events}, and using
$\E Z_C\ge\ell/2$, we obtain
\begin{equation}\label{eq:even-rank}
 \E q_C\ge\frac{\ell-1}{2}+\frac1{2(\ell-3)}.
\end{equation}
The coefficient $\ell-4$ is nonnegative, including at $\ell=4$.

To pass from either cycle estimate to the full graph, choose the fixed
set $F$ from Lemma~\ref{lem:extend}.
It has $d-(\ell-1)$ edges, each zero with probability at least $1/2$.
Thus
\[
 \E q\ge\E q_C+\frac{d-(\ell-1)}2.
\]
Equations~\eqref{eq:odd-rank} and~\eqref{eq:even-rank} give respectively
$\Delta\ge1/[2(\ell-1)]$ and $\Delta\ge1/[2(\ell-3)]$.
Both bounds are at least $1/(2d)$. Only the marginal zero probabilities
of the additional edges were used; independence from the cycle is
unnecessary.
\end{proof}

\subsection{The single-class comparison and the induction}

\begin{proposition}[Single-class comparison]\label{prop:single-class}
For either bipartition class, with $d$ one less than its size and $K$
the number of components of its full zero-edge subgraph,
\begin{equation}\label{eq:single-class}
 \E b_{K-1}\le\E b_{Y_{d,1/2}}.
\end{equation}
The inequality is strict if the corresponding auxiliary graph contains
a cycle.
\end{proposition}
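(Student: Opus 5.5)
We split according to whether the auxiliary graph $S=S_B$ (the case of $S_W$ is symmetric) contains a cycle. By definition $X=K-1=d-q$, where $q=\rk_S(A)$ and $A=A_{g_B}$. If $d\le1$, then $S$ is a single vertex or a single edge, hence a tree, so the trivial cases fall into the tree case.

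\emph{Tree case.} Suppose $S$ is a tree. Then $q=|A|$, because every edge set of a tree is a forest, and so $X=d-|A|$. Since $S$ is bipartite, the edge model \eqref{eq:edge-model} applies. On a tree, the increments $g(u)-g(v)$ along the edges can be prescribed independently in $\{-1,0,1\}$, and each choice determines a unique rooted function. The weight therefore factors, so the edges are independently zero with probabilities $p_e=\lambda_e/(\lambda_e+2)\ge1/2$. Consequently $X$ is a sum of independent Bernoulli variables with parameters $1-p_e\le1/2$, and it is stochastically dominated by $Y_{d,1/2}$, for instance by a monotone coupling edge by edge. Since $(b_m)$ is increasing by Lemma~\ref{lem:envelope}, we get $\E b_X\le\E b_{Y_{d,1/2}}$. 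We do not need strictness here.

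\emph{Cycle case.} If $S$ contains a cycle, Proposition~\ref{prop:bhm-surplus} gives $d\ge2$ and $\Delta\ge1/(2d)$. Lemma~\ref{lem:reflection} provides the forest hypothesis \eqref{eq:forest-assumption} with $p=1/2$. Hence Proposition~\ref{prop:comparison} applies and yields
\[
 \E b_X-\E b_{Y_{d,1/2}}\le\varepsilon_{d,1/2}-\Delta J_d-\E a_X
 \le\varepsilon_{d,1/2}-\frac{J_d}{2d},
\]
where we used $a_m\ge0$ from Lemma~\ref{lem:envelope} and $J_d>0$. By \eqref{eq:half-budget} of Lemma~\ref{lem:budget}, the right-hand side is strictly negative, which gives the strict inequality.

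The only step that needs care is the tree case. We must justify that the edge-zero indicators are independent under the marginal \eqref{eq:marginal}: this holds because $S$ is bipartite, and a tree has no triangles. We must also justify that $X$ is stochastically dominated by $Y_{d,1/2}$, and not merely that the means compare. Both points follow from the product form of \eqref{eq:edge-model} on a tree. Everything else is a direct combination of results already established.
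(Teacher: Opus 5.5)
Your proposal is correct and follows the paper's proof essentially step for step. In the cyclic case you combine Proposition~\ref{prop:comparison}, the forest hypothesis from Lemma~\ref{lem:reflection}, the surplus $\Delta\ge 1/(2d)$ from Proposition~\ref{prop:bhm-surplus}, and the budget~\eqref{eq:half-budget}; in the tree case, including the small cases $d\le1$, you use the edge-factorized marginal~\eqref{eq:edge-model}, the independent nonzero indicators with probabilities $2/(\lambda_e+2)\le 1/2$, a coupling below Bernoulli$(1/2)$ variables, and the monotonicity of $(b_m)$, exactly as the paper does.
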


\begin{proof}
For a cyclic auxiliary graph, Proposition~\ref{prop:comparison} with
$p=1/2$, together with Proposition~\ref{prop:bhm-surplus} and
\eqref{eq:half-budget}, yields
\[
 \E b_{K-1}-\E b_{Y_{d,1/2}}
 \le\varepsilon_{d,1/2}-\frac{J_d}{2d}-\E a_{K-1}<0.
\]
If the auxiliary graph is a tree, its marginal has the edge-factorized
form~\eqref{eq:edge-model}. On a rooted tree, height functions are in
bijection with edge-increment assignments. The factorization makes
these increments independent,
and edge $e$ is nonzero with probability
$2/(\lambda_e+2)\le1/2$. On a tree the zero-edge rank equals the number
of zero edges, so $K-1$ is the sum of these independent nonzero indicators.
Couple each indicator below an independent Bernoulli$(1/2)$ variable.
Since $b_m$ is increasing,~\eqref{eq:single-class} follows. A one-vertex
auxiliary graph has $d=0$, with both sides zero. The tree argument
therefore also covers the small cases excluded from the cycle argument.
\end{proof}

\begin{proof}[Proof of Theorem~\ref{thm:bhm}]
We proceed by induction on $n=|V(G)|$. If $n=1$, both ranges are zero.
Suppose
$n\ge2$ and that the theorem holds for all smaller connected bipartite
graphs. Let $|B|=a$ and $|W|=b$, so $a+b=n$. Put
\[
 t_j=\E b_{Y_{j,1/2}}\qquad(j\ge0).
\]
Each quotient in~\eqref{eq:decimation} has fewer than $n$ vertices, so
the induction hypothesis applies to every realized quotient. Applying
Proposition~\ref{prop:single-class} then gives
\begin{equation}\label{eq:partition-bound}
 \widehat h(G)
 \le1+\E b_{K_B-1}+\E b_{K_W-1}
 \le1+t_{a-1}+t_{b-1}.
\end{equation}

To compare the two bipartition sizes, we use the concavity of $(t_j)$.
Couple $Y_{j+1,1/2}=Y_{j,1/2}+\xi$, where the additional fair Bernoulli
variable $\xi$ is independent. This coupling gives
\[
 t_{j+1}-t_j
   =\frac12\E\bigl[b_{Y_{j,1/2}+1}-b_{Y_{j,1/2}}\bigr].
\]
The increments of $b$ are nonincreasing, and the coupled binomial
variables are nondecreasing in $j$. Hence these increments of $t$ are
nonincreasing. Balancing two nonnegative indices with sum $n-2$ gives
\begin{equation}\label{eq:balance}
 t_{a-1}+t_{b-1}
 \le t_{\lfloor(n-2)/2\rfloor}+t_{\lceil(n-2)/2\rceil}.
\end{equation}

To identify the balanced bound, apply the exact
decomposition~\eqref{eq:decimation} to $P_n$.
Its two auxiliary graphs are paths, including the possible one-vertex
path, and every auxiliary edge has $\lambda_e=2$. Their nonzero-edge
counts are binomial with parameter $1/2$, and each zero-edge quotient
is again a path. Thus
\begin{equation}\label{eq:path-decimation}
 b_{n-1}=1+t_{\lfloor(n-2)/2\rfloor}
           +t_{\lceil(n-2)/2\rceil}.
\end{equation}
Combining~\eqref{eq:partition-bound}--\eqref{eq:path-decimation} proves
$\widehat h(G)\le b_{n-1}=\widehat h(P_n)$ and completes the induction.
\end{proof}

The sharper estimate~\eqref{eq:partition-bound} retains the two
bipartition sizes. This completes the proof of BHM without invoking
the LNR inequality.

For the running example, the path formula gives
$\widehat h(P_5)=b_4=19/8$, whereas $\widehat h(G_\star)=9/5$.
The triangle $S_W$ is the cyclic auxiliary graph to which the strict
single-class comparison applies.

\section{BHM implies LNR}\label{sec:lnr}

We now consider a uniform lazy height function $f\in\LL(G)$ on an
arbitrary connected simple graph with $n=d+1$ vertices. Let
\begin{equation}\label{eq:lazy-notation}
 A=A_f,\qquad K=k_G(A),\qquad q=\rk_G(A),\qquad
 X=K-1=d-q,\qquad \Delta_G=\E q-\frac d3.
\end{equation}
Deleting the zero increments from the lazy walk on a path leaves a
simple random walk. Conditional on the number of remaining increments,
their signs are independent and fair. Consequently,
\begin{equation}\label{eq:lazy-path}
 h(P_n)=\E b_{Y_{d,2/3}}.
\end{equation}
We first compare the number of zero-edge components with the path
model. This comparison will be proved independently of
Theorem~\ref{thm:bhm}.

\begin{proposition}[Zero-edge contraction comparison]\label{prop:lazy-contraction}
For the uniform lazy model on every connected simple graph $G$,
\begin{equation}\label{eq:lazy-contraction}
 \E b_{K-1}\le h(P_n).
\end{equation}
Equality holds if and only if $G$ is a tree.
\end{proposition}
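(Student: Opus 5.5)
We will run the argument of Section~\ref{sec:bhmproof} with parameter $p=2/3$ in place of $1/2$. The uniform lazy measure assigns weight one to every $f\in\LL(G)$, so it depends only on $A_f$, and Lemma~\ref{lem:pgf} applies once we prove the forest hypothesis
\[
 \PP(F\subseteq A_f)\ge 3^{-|F|}\qquad\text{for every forest }F\subseteq E(G).
\]
For a single edge $uv$ we use the map $f\mapsto f'$ that, on the component of $u$ after deleting... Instead we use a cleaner injection. Condition on previously imposed equalities by identifying the endpoints; the identified graph is still connected, and the lazy model on it is again uniform. Since $F$ is a forest, the next edge still joins distinct vertices. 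It then suffices to show $\PP(f(u)=f(v))\ge1/3$ for an edge $uv$. For this we build injections from $\{f(u)=f(v)+1\}$ and from $\{f(u)=f(v)-1\}$ into $\{f(u)=f(v)\}$. The natural candidate is the ``lowering'' map $f\mapsto\min(f,f(v))$ restricted to the component of $\{f\ge f(u)\}$ containing $u$, but it fails to be injective in general. The step we expect to resist is therefore the exact count $|\{f(u)=f(v)\}|\ge|\{f(u)=f(v)+1\}|$. As a fallback we would use the height-shift maps $x\mapsto x-\mathbf 1_{\{x\ge c\}}$ on level sets, paired so that the compositions are injective. This is the lazy analogue of Lemma~\ref{lem:reflection}.

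Given the forest bound, Lemma~\ref{lem:pgf} yields $\Delta_G\ge0$. Proposition~\ref{prop:comparison} with $p=2/3$ then gives
\[
 \E b_{K-1}-h(P_n)\le\varepsilon_{d,2/3}-\Delta_G J_d-\E a_X ,
\]
using \eqref{eq:lazy-path}. By \eqref{eq:general-budget} at $p=2/3$ we have $\varepsilon_{d,2/3}/J_d<\frac{3\sqrt3}{16\sqrt2}\,T_d/d$, so strictness for cyclic $G$ reduces to a surplus bound of order $\Delta_G\ge c/d$ with $c$ slightly above $\tfrac{3\sqrt3}{16\sqrt2}T_3\approx0.37$. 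Small $d$ would be checked directly, as in the $d=2$ case of Lemma~\ref{lem:budget}.

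For the surplus we follow Proposition~\ref{prop:bhm-surplus}. Pick a shortest cycle $C$ of length $\ell$ and use Lemma~\ref{lem:extend} to write
\[
 \E q\ge \E q_C+\frac{d-\ell+1}{3}.
\]
On $C$ the number of nonzero increments $\ell-Z_C$ has zero signed sum, so $Z_C=\ell-1$ is impossible. Rank drops by one only when $Z_C=\ell$. The pointwise certificate $q_C\ge\frac{(\ell-2)Z_C+\mathbf 1_{\{Z_C\ge1\}}\cdots}{\ell-1}$ must now be built by hand, because $Z_C=0$ is allowed for even $\ell$, and also for odd $\ell$ when steps cancel. The tool is a comparison $\PP(Z_C=0)\le c'\,\PP(Z_C=\ell)$. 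We would prove it with Lemma~\ref{lem:boundary} applied to $H=G-E(C)$, with all $\lambda_e=1$. Here the Fourier coefficients become $1+2\cos$, which are no longer positive, so the lemma does not apply verbatim. This is the second main obstacle. We would try to bypass it by comparing each nonzero-cycle boundary pattern directly with the constant one through a monotone (FKG-type) coupling, or by merging $C$'s vertices as in the reflection step. The expected surplus is then $\Delta_G\ge 1/(3(\ell-1))$, which is roughly of order $1/d$.

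Equality for trees is immediate. On a tree the increments are independent and uniform on $\{-1,0,1\}$, so $K-1\sim\Bin(d,2/3)$ and the two sides coincide. Conversely, any cycle gives $\Delta_G>0$ large enough for the strict bound above.
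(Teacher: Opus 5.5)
Both load-bearing estimates are still open, as you yourself flag, so this is not yet a proof. The first is the forest hypothesis. Your reduction by contracting previously imposed equalities is correct, and it leaves $\PP(f(u)=f(v))\ge1/3$ on an edge, i.e.\ $c_0\ge c_1$, where $c_j$ counts $f\in\LL(G,u)$ with $f(v)=j$. Your lowering map is not injective, and the ``paired height-shift'' fallback is not an argument. The missing idea is to argue with \emph{pairs} of functions and prove log-concavity, $c_j^2\ge c_{j-1}c_{j+1}$. Take $f,g$ with $(f(u),f(v))=(0,j+1)$ and $(g(u),g(v))=(1,j)$, and pass to their pointwise minimum and maximum. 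The legal ways to split these two values back into two 1-Lipschitz functions are exactly the constant choices on components of a ``locking graph''; its edges are those where opposite choices would break the Lipschitz condition. Since $f$ is the lower function at $u$ and the upper one at $v$, these two vertices lie in different components. Swapping $f,g$ on the component of $v$ is then a self-inverse injection into pairs with values $(0,j)$ and $(1,j+1)$. Negation gives $c_{-1}=c_1$, hence $c_0^2\ge c_1^2$.

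The second open estimate is the rank surplus. Your remark that $Z_C=0$ can occur on odd cycles ``when steps cancel'' is wrong: the $\pm1$ increments around a cycle sum to zero, so $\ell-Z_C$ is always even. For odd $\ell$ this gives $Z_C\ge1$, so \eqref{eq:odd-certificate} carries over verbatim. With $\E Z_C\ge\ell/3$ it gives $\Delta_G\ge 2/(3(\ell-1))\ge2/(3d)$, which suffices without any boundary comparison. The genuine gap is a graph all of whose cycles are even. There \eqref{eq:even-certificate} yields surplus $(1+P_0-2P_*)/(\ell-3)$, so you need a quantitative bound $2P_*<1+P_0$. Lemma~\ref{lem:boundary} is unavailable. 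Even if $Z_H(h)\le Z_H(0)$ held with all $\lambda_e=1$, it would give only $P_*\le\binom{\ell}{\ell/2}P_0$. On $K_{2,m}$ the $4$-cycle has $P_0\to1/9$, and $P_*\le6P_0$ does not exclude $2P_*\ge1+P_0$. Your suggested FKG coupling is not specified, so it does not close this.

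The paper avoids cycle comparisons entirely. On a 2-connected block it counts the $2^{d+1}-1$ assignments with values in $\{0,1\}$ or $\{0,-1\}$, using $M\le2^d$ standard functions in the bipartite case, to get $\PP(\varnothing\ne A\ne E(G))\ge4/7$. It then fixes $2d$ spanning trees $T_{v\to w}$, one per orientation of an edge of a spanning tree, each making $v$ a leaf. These give the pointwise inequality \eqref{eq:detector}, hence $\Delta\ge2/(7d)$. The block decomposition (independent block functions, additive rank) extends this to every connected $G$.

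Minor: $\frac{3\sqrt6}{32}T_3\approx0.30$, not $0.37$. The paper uses the constant $2/7$ via $T_4$ for $d\ge4$ and checks $d=2,3$ directly.
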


We prove the proposition in three steps: a forest probability bound,
a rank surplus for cyclic graphs, and the corresponding parity estimate.

\subsection{A switching argument for zero edges}

\begin{lemma}\label{lem:lazy-forest}
In the uniform lazy model,
\begin{equation}\label{eq:lazy-edge}
 \PP(f(u)=f(v))\ge\frac13\qquad(uv\in E(G)).
\end{equation}
For every forest $F\subseteq E(G)$,
\begin{equation}\label{eq:lazy-forest}
 \PP(F\subseteq A)\ge3^{-|F|}.
\end{equation}
\end{lemma}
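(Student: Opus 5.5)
The plan has two parts. The forest bound~\eqref{eq:lazy-forest} should follow from the single-edge bound~\eqref{eq:lazy-edge} by the same contraction trick as in Lemma~\ref{lem:reflection}. The single-edge bound needs a counting argument for the uniform lazy model, which I expect to be the real work.

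\emph{Reduction of~\eqref{eq:lazy-forest} to~\eqref{eq:lazy-edge}.} Order the edges of $F$ as $e_1,\dots,e_k$. Conditional on $e_1,\dots,e_{i-1}$ being zero, a lazy height function on $G$ is the same as a lazy height function on the simple quotient $G/\{e_1,\dots,e_{i-1}\}$: identify the endpoints and delete loops and repeated edges. Since a lazy function is constant on the identified vertices, the edge constraints of $G$ become exactly those of the quotient. The conditional law is therefore again uniform on $\LL$ of a connected simple graph. Because $F$ is a forest, $e_i$ still joins two distinct vertices of the quotient. The single-edge bound then gives conditional probability at least $1/3$, and multiplying over $i$ gives $3^{-|F|}$. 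Unlike Lemma~\ref{lem:reflection}, no common-neighbour structure is needed, because~\eqref{eq:lazy-edge} will be proved for an arbitrary edge of an arbitrary connected graph.

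\emph{The single-edge bound.} Rooting is immaterial, since the event $f(u)=f(v)$ is invariant under adding constants. Write $N_k$ for the number of rooted lazy functions with $f(u)-f(v)=k$, where $k\in\{-1,0,1\}$. The global sign flip $f\mapsto -f$ gives $N_1=N_{-1}$. Hence~\eqref{eq:lazy-edge} is equivalent to $N_0\ge N_1$, and I would prove this by constructing an injection from $\{f(u)=f(v)+1\}$ into $\{f(u)=f(v)\}$.

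\emph{Candidate switching.} Put $c=f(v)$. Let $D$ be the component containing $u$ of the subgraph induced by $\{x:f(x)>c\}$, and lower $f$ by one on $D$. Every neighbour of $D$ outside $D$ has height at most $c$, while vertices of $D$ have height at least $c+1$. So every boundary edge of $D$ goes from height $c+1$ to height $c$, and it becomes a zero edge after the shift. The result is a lazy function with $g(u)=g(v)$, and it still satisfies $g(o)=0$ if we reroot at $v\notin D$.

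\emph{Main obstacle: injectivity.} After the shift, the bottom layer of $D$ merges with height-$c$ vertices outside $D$, including $v$, so $D$ cannot simply be read off from $g$. My first attempt at a fix is to shift along the boundary containing $v$ instead. Take $D'$ to be the component containing $u$ of $\{x: f(x)-\operatorname{dist}_{G-uv}(x,v)\dots\}$, or more simply choose $D$ minimally: the set of vertices joined to $u$ by a path on which $f$ strictly exceeds $c$ \emph{and which avoids the neighbourhood of $v$ at level $c+1$}. The inverse would then locate $D$ as the component of $u$ in $\{g\ge c\}\setminus\{v\}$ cut at the edge $uv$.

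If no clean injection emerges, the fallback is an analytic comparison. Let $Z(k)$ be the number of lazy functions on $G-uv$ with $f(u)-f(v)=k$. One would show that $k\mapsto Z(k)$ is symmetric and unimodal, for instance via an FKG/Holley-type monotone coupling of the boundary-conditioned lazy model in $k$. Then $N_0=Z(0)\ge Z(1)=N_1$, which gives~\eqref{eq:lazy-edge}.
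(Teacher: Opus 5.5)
\textbf{There is a genuine gap: the key inequality $N_0\ge N_1$ is never proved.}

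Your reduction of \eqref{eq:lazy-forest} to \eqref{eq:lazy-edge} by successive contraction is correct and is the paper's argument. So is your reduction of \eqref{eq:lazy-edge} to $N_0\ge N_1$ via $f\mapsto -f$. But $N_0\ge N_1$ is the entire content of the lemma.

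Your shift map is not merely hard to invert; it is not injective. Take the path with edges $vu$ and $ux$, and $c=0$. The function $(f(v),f(u),f(x))=(0,1,1)$ has $D=\{u,x\}$, and $(0,1,0)$ has $D=\{u\}$. Both map to the zero function, so no recovery rule can help; the map itself must change.

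The suggested repair is left unfinished. As written, it would also exclude $u$ itself, since $u$ is a neighbour of $v$ at level $c+1$.

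The fallback assumes that $k\mapsto Z(k)$ is unimodal. For a symmetric sequence this is exactly $Z(0)\ge Z(1)$, i.e.\ the statement to be proved. A Holley coupling would at best show that the law conditioned on $f(u)-f(v)=k$ is stochastically increasing in $k$. That compares measures, not the counts $Z(0)$ and $Z(1)$, so by itself it does not give unimodality.

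\textbf{The missing idea: switch \emph{pairs} of functions to prove log-concavity.}

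Let $c_j$ count the functions in $\LL(G,u)$ with $f(v)=j$. Consider pairs $(f,g)$ with $(f(u),f(v))=(0,j+1)$ and $(g(u),g(v))=(1,j)$; translating $g$ down by one shows there are $c_{j+1}c_{j-1}$ of them.

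Pointwise minima and maxima of lazy functions are again lazy. Fix these unordered values at every vertex. The legal pairs are then exactly the choices, on vertices where the two values differ, of which function takes the upper value. These choices must be constant on the components of a ``locking'' graph that is determined by the min/max data alone.

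Since $f$ is lower at $u$ and upper at $v$, these two vertices lie in different locking components. Exchange $f$ and $g$ on the component of $v$. This produces a pair with values $(0,j)$ and $(1,j+1)$, and such pairs are counted by $c_j^2$. The exchange preserves the min/max data, hence the locking graph, so repeating it recovers the input and the map is injective. Therefore $c_j^2\ge c_{j-1}c_{j+1}$, and together with $c_1=c_{-1}$ this gives $c_0\ge c_1$.

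The unordered values at each vertex are the invariant that lets the switched set be reconstructed. Your single-function shift has no such invariant.
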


\begin{proof}
For distinct vertices $u,v$, let
\[
 c_j=|\{f\in\LL(G,u):f(v)=j\}|\qquad(j\in\Z).
\]
We prove $c_j^2\ge c_{j-1}c_{j+1}$ by an injection. Take a pair of
integer 1-Lipschitz functions $f,g$ with
\[
 (f(u),f(v))=(0,j+1),\qquad (g(u),g(v))=(1,j).
\]
Translating the second function down by one shows that there are
$c_{j+1}c_{j-1}$ such pairs. At every vertex $x$, set
$\ell_x=\min(f(x),g(x))$ and $h_x=\max(f(x),g(x))$.
Pointwise minima and maxima preserve the Lipschitz constraint, so both
$\ell$ and $h$ are integer 1-Lipschitz functions.
On the vertices where $\ell_x<h_x$, record whether $f$ takes the upper
or lower value. Join two such vertices $x,y$ by a \emph{locking edge}
when $xy\in E(G)$ and at least one of
\[
 |\ell_x-h_y|\le1,\qquad |h_x-\ell_y|\le1
\]
fails. Equal choices of upper or lower values are always legal, since
$\ell$ and $h$ are Lipschitz. Opposite choices are legal exactly when
the edge is not locking. Edges incident to a vertex with equal upper
and lower values impose no additional choice constraint: exchanging
the two functions merely permutes the same pair of edge constraints.

It follows that legal pairs with these fixed unordered vertex values
are precisely the choices of a constant sign on each locking component.
At the vertex $u$, the function $f$ takes the lower value; at $v$, it
takes the upper value. These vertices therefore lie in distinct locking
components. Exchange $f$ and $g$ on the component containing $v$.
The new pair $f',g'$ is legal and satisfies
\[
 (f'(u),f'(v))=(0,j),\qquad
 (g'(u),g'(v))=(1,j+1).
\]
The unordered vertex values, and hence the locking graph, remain
unchanged, so exchanging the same component again recovers the input.
The map is therefore injective. After translating the second function
down by one, we see that the target has $c_j^2$ elements. This proves
the claimed log-concavity.

Negating all heights gives $c_{-1}=c_1$. Combining this symmetry with
log-concavity gives $c_0\ge c_1$.
If $u,v$ are adjacent, their difference can only be $-1,0,1$; hence
\[
 \PP(f(u)=f(v))=\frac{c_0}{c_0+2c_1}\ge\frac13.
\]
To obtain the forest bound, require the forest edges to be zero one
at a time.
Conditioning on the previous equalities gives the uniform lazy model
on the graph obtained by contracting those edges and deleting loops
and repeated edges. A remaining forest edge still has distinct
endpoints. The conditional zero probability is at least $1/3$ at
each step, which proves~\eqref{eq:lazy-forest}.
\end{proof}

\subsection{Rank surplus in the uniform lazy model}

\begin{lemma}\label{lem:lazy-surplus}
If $G$ contains a cycle, then $d\ge2$ and
\begin{equation}\label{eq:lazy-surplus}
 \Delta_G\ge\frac{2}{7d}.
\end{equation}
\end{lemma}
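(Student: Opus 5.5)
The plan follows the proof of Proposition~\ref{prop:bhm-surplus}. We isolate one cycle, prove a rank surplus on that cycle, and transfer it to all of $G$ with Lemma~\ref{lem:extend}. If $G$ contains a cycle, it has at least three vertices, so $d\ge2$.

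Let $C$ be a shortest cycle, of length $\ell\ge3$. Write $Z_C=|A\cap E(C)|$ and $q_C=\rk_C(A\cap E(C))$. Take the fixed forest $F$ of Lemma~\ref{lem:extend}; it has $d-(\ell-1)$ edges. By~\eqref{eq:lazy-edge}, each edge of $F$ is zero with probability at least $1/3$. Hence
\[
 \E q\ge \E q_C+\frac{d-(\ell-1)}3 .
\]
Since $d\ge\ell-1$, it suffices to prove the cycle estimate
\[
 \E q_C-\frac{\ell-1}3\ge\frac{2}{7(\ell-1)} .
\]
As before, no independence between $F$ and $C$ is needed.

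For the cycle estimate we use the structure of zero edges on a closed lazy walk. Pointwise, $q_C=Z_C-\one_{\{Z_C=\ell\}}$. Moreover $Z_C\ne\ell-1$, because if all but one edge of $C$ are zero, closing the cycle forces the last increment to be zero too. Split $C$ into a path $P$ of $\ell-1$ edges and a closing edge $e$. Then
\[
 q_C=Z_P+\one_{\{e\in A\}}-\one_{\{Z_C=\ell\}} .
\]
Using~\eqref{eq:lazy-edge} for every edge, this gives
\[
 \E q_C-\frac{\ell-1}3\ \ge\ \frac13-\PP(Z_C=\ell) .
\]
So everything reduces to an upper bound on the probability $P_0=\PP(Z_C=\ell)$ that the whole cycle is zero. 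If the simple bound $P_0\le\frac13-\frac2{7(\ell-1)}$ is not available uniformly, I would instead use a linear certificate in the style of~\eqref{eq:even-certificate}. It would pass through $(\ell,\ell-1)$ and the admissible values $Z_C\le\ell-2$, weighted by an inequality relating $P_0$ to $\PP(Z_C=\ell-2)$ or to $\PP(Z_C=0)$. The constant $2/7$ should come out of optimizing this certificate, with the triangle $\ell=3$ as the extremal case.

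The main obstacle is the upper bound on $P_0$. Lemma~\ref{lem:boundary} cannot be used here: in the uniform lazy model every $\lambda_e=1$, so the Fourier coefficients $1+2\cos(2\pi j/N)$ are not positive. My first attempt is a switching injection in the spirit of Lemma~\ref{lem:lazy-forest}. Start from a configuration that is constant on $C$, say with value $c$. Pick a cycle vertex $x$ and consider the connected component of $\{y: f(y)\ge c\}$ containing $x$ in the graph with the other cycle vertices deleted. Shift this component up by one, or apply the $\min/\max$ exchange used in the proof of Lemma~\ref{lem:lazy-forest}. The aim is to produce configurations with exactly two nonzero cycle edges, and to show the map is injective up to a bounded multiplicity.

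For the triangle, the resulting inequality would bound $P_0$ by a fixed fraction of $\PP(Z_C=1)$, that is, of exactly one zero edge. For longer cycles I expect the shortest-cycle choice (no chords) to keep the multiplicity bounded and the estimate to improve with $\ell$. If the injection only yields a constant-factor comparison such as $\PP(Z_C=\ell)\le c\,\PP(Z_C=\ell-2)$, I would feed it into the certificate above and check numerically that the constant $2/7$ survives for $\ell=3,4$. The remaining cases should be monotone in $\ell$.
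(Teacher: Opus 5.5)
There is a genuine gap: the cycle estimate you reduce to is false. Lemma~\ref{lem:extend} credits the forest $F$ only with the marginal bound $1/3$, so your argument needs $\E q_C-\frac{\ell-1}3\ge\frac2{7(\ell-1)}$ on the chosen shortest cycle.

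Here is a counterexample. Take the $4$-cycle $C=u_0v_1u_1v_2$. Add $k$ vertices adjacent exactly to $u_0,u_1$, and $k$ vertices adjacent exactly to $v_1,v_2$. The graph is bipartite of girth $4$, so $C$ is a shortest cycle, and it is chordless. Given the values on $C$, the added vertices have independent choices: $3-|f(u_0)-f(u_1)|$ for the first family and $3-|f(v_1)-f(v_2)|$ for the second. Enumerating the rooted values on $C$ gives
\[
 \E q_C-1=\frac{8\cdot6^k+4\cdot4^k-4\cdot3^k}{3\cdot9^k+8\cdot6^k+4\cdot4^k+4\cdot3^k}.
\]
This tends to $0$, and it is already below $2/21$ at $k=9$. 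The lemma itself only demands $2/(7d)$ with $d=2k+3$.

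In this example $P_0\to1/3$, $\PP(Z_C=0)\to2/3$ and $\PP(Z_C=2)\to0$. So no switching injection can give $P_0\le c\,\PP(Z_C=\ell-2)$. More fundamentally, no certificate can rescue this cycle, because its local surplus really vanishes. The proposal also offers no rule for choosing a better cycle. Showing that some cycle always carries surplus of order $1/\ell$ would be a new statement, stronger than the lemma.

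Your triangle heuristic is also misdirected. In $K_3$ joined to $k$ independent vertices, the triangle is constant with probability $3^k/(3^k+6\cdot2^k)\to1$, so $P_0$ is not a bounded fraction of $\PP(Z_C=1)$. Odd cycles are nevertheless harmless. The nonzero increments around a closed lazy walk are $\pm1$ and sum to zero, so $Z_C\equiv\ell\pmod 2$. Then the certificate \eqref{eq:odd-certificate} with $\E Z_C\ge\ell/3$ gives surplus $2/(3(\ell-1))$ with no bound on $P_0$ at all. The real difficulty is entirely in the bipartite case.

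The missing idea is a global source of surplus, which works because only $2/(7d)$ is required. The paper first decomposes $G$ into blocks. The block functions are independent and uniform, and ranks, hence $\Delta$, are additive over blocks, so it suffices to treat a $2$-connected block.

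In such a block, fix a spanning tree $S_0$. For each of the $2d$ orientations $v\to w$ of its edges, fix a spanning tree $T_{v\to w}$ in which $v$ is a leaf attached at $w$. Each $T_{v\to w}\cap A$ is a forest in $A$, so it has at most $\rk(A)$ edges. When $\varnothing\ne A\ne E(G)$, fullness of $A$ gives a zero component $C'$ with $2\le|C'|<|V|$. For an $S_0$-edge leaving $C'$, the forest $T_{v\to w}\cap A$ isolates $v\in C'$ and so loses at least one edge. Averaging over the $2d$ trees gives the pointwise inequality \eqref{eq:detector}.

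The counting bound $\PP(\varnothing\ne A\ne E(G))\ge4/7$, obtained from the two-valued assignments, then yields $\Delta\ge2/(7d)$. This argument never asks a fixed cycle to carry the surplus, which is exactly what fails in the example above.
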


\begin{proof}
First suppose that $G$ is 2-connected and has at least three vertices.
We first show that the full zero set is nonempty and proper with
uniformly positive probability:
\begin{equation}\label{eq:partial-zero}
 \PP(\varnothing\ne A\ne E(G))\ge\frac47.
\end{equation}
To prove this, note that all rooted assignments with values in
$\{0,1\}$ or in $\{0,-1\}$ are lazy. These two families have
$2^{d+1}-1$ distinct members. The event $A=E(G)$ consists of the single
constant-zero assignment. If $G$ is not bipartite, the event
$A=\varnothing$ is impossible, since it would give a standard height
function. Since $d\ge2$, the probability of a nonempty proper zero set
is therefore at least $1-(2^{d+1}-1)^{-1}\ge6/7$.

If $G$ is bipartite, let $M$ be its number of rooted standard height
functions and $I$ its number of lazy height functions with partial zero
set. Reading signs on a spanning tree gives $M\le2^d$. Among the
$2^{d+1}-1$ two-valued assignments just counted, exactly one has all
edges zero and exactly two have no zero edge. The latter assertion
uses connectedness and the uniqueness of the bipartition up to exchange.
Therefore $I\ge2^{d+1}-4$, and
\[
 \PP(\varnothing\ne A\ne E(G))
   =\frac{I}{I+M+1}
   \ge\frac{2^{d+1}-4}{3\cdot2^d-3}\ge\frac47.
\]
Here $d\ge3$, since a bipartite 2-connected simple graph has at least
four vertices. This proves~\eqref{eq:partial-zero}.

Fix a spanning tree $S_0$ of $G$ and orient each of its $d$ edges in
both directions. For each directed edge $v\to w$, choose a spanning
tree of $G-v$, which exists by 2-connectedness, and add $vw$. Denote
the resulting spanning tree by
$T_{v\to w}$. It makes $v$ a leaf adjacent only to $w$.
These $2d$ trees are fixed and need not be distinct.

For every full zero set $A$, each $T_{v\to w}\cap A$ is a forest in
$A$, so it has at most $\rk_G(A)$ edges. If
$\varnothing\ne A\ne E(G)$, there is a zero component $C$ containing
at least two vertices, and $C\ne V(G)$. The latter property uses that
$A$ is the \emph{full} zero set: a connected zero subgraph would force
the entire function to be constant. Some edge of $S_0$ leaves $C$;
orient it $v\to w$ with $v\in C$ and $w\notin C$. In
$T_{v\to w}\cap A$, the vertex $v$ is isolated. This forest therefore
fails to connect the vertices of $C$ and has at most $\rk_G(A)-1$ edges.
We have proved the pointwise inequality
\begin{equation}\label{eq:detector}
 \rk_G(A)-\frac1{2d}\sum_{v\to w\in\vec E(S_0)}
                          |A\cap E(T_{v\to w})|
 \ge\frac{\one_{\{\varnothing\ne A\ne E(G)\}}}{2d}.
\end{equation}
By~\eqref{eq:lazy-edge}, each tree in the sum has at least $d/3$
zero edges in expectation. Taking expectations in~\eqref{eq:detector}
and applying~\eqref{eq:partial-zero} proves
$\Delta_G\ge2/(7d)$ for 2-connected $G$.

For a general connected graph, decompose it into blocks, including each
bridge as a block isomorphic to $K_2$. Write the blocks as $G_i$ and
$d_i=|V(G_i)|-1$, so $d=\sum_i d_i$. Root each block at the articulation
vertex toward the global root, with an arbitrary local root in the
initial block. Subtracting the local root height gives a bijection
between global lazy functions and tuples of rooted lazy functions on
the blocks. Conversely, translate the block functions successively
along the block tree to recover the global function. The local
functions are therefore independent and uniform.

The zero-edge rank is additive under this decomposition. Gluing two
graphs at a single vertex reduces both the sum of their vertex counts
and the sum of their zero-component counts by one. Their ranks
therefore add. Applying this observation successively to the blocks gives
\begin{equation}\label{eq:block-ranks}
 q_G=\sum_i q_{G_i},\qquad
 \Delta_G=\sum_i\Delta_{G_i}.
\end{equation}
A bridge block has $\Delta_{G_i}=0$. Each other block is 2-connected
with at least three vertices and contributes at least $2/(7d_i)$.
If $G$ has a cycle, there is at least one such block, with $d_i\le d$.
Equation~\eqref{eq:lazy-surplus} follows.
\end{proof}

\subsection{The lazy comparison and the implication}

\begin{lemma}\label{lem:lazy-budget}
For every integer $d\ge2$,
\begin{equation}\label{eq:lazy-budget}
 \varepsilon_{d,2/3}<\frac{2J_d}{7d}.
\end{equation}
\end{lemma}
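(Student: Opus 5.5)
The plan is to reduce to the general budget \eqref{eq:general-budget} of Lemma~\ref{lem:budget} for large $d$, and to evaluate the two smallest cases directly. This mirrors the proof of \eqref{eq:half-budget}.

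\emph{Large $d$.} Taking $p=2/3$ is admissible in Lemma~\ref{lem:budget}. Using the quantity $T_d$ from \eqref{eq:T}, the general budget gives
\[
 \frac{d\,\varepsilon_{d,2/3}}{J_d}<\frac{T_d}{8(2/3)^{3/2}}
 =\frac{3\sqrt3\,T_d}{16\sqrt2}.
\]
It therefore suffices to show $T_d<\tfrac{32\sqrt2}{21\sqrt3}\approx1.244$. By \eqref{eq:T-recurrence}, $T_d$ is decreasing. Moreover
\[
 T_4=T_3\Bigl(1-\tfrac{3}{48}\Bigr)=\tfrac{15}{16}\cdot\tfrac{105\pi}{256}.
\]
With $\pi<22/7$ this gives $T_4<1.21$, which settles all $d\ge4$. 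The threshold is crossed only between $d=3$ and $d=4$: $T_3\approx1.289$ exceeds $1.244$. So the cases $d=2,3$ must be handled separately. The main obstacle is just this: the crude beta bounds are too weak at $d=3$, so exact values are needed there.

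\emph{Small $d$.} By \eqref{eq:parity-error}, $a_m$ vanishes for odd $m$. For $d\le3$ the only positive even value $Y_{d,2/3}$ can take is $2$. Hence
\[
 \varepsilon_{d,2/3}=\PP(Y_{d,2/3}=2)\,a_2 .
\]
From the values in the proof of Lemma~\ref{lem:budget}, namely $B_2=8/\pi-1$ and $b_2=3/2$, we get $a_2=8/\pi-5/2$.

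The binomial probabilities are
\[
 \PP(Y_{2,2/3}=2)=\tfrac49,\qquad \PP(Y_{3,2/3}=2)=3\cdot\tfrac49\cdot\tfrac13=\tfrac49 .
\]
So in both cases $\varepsilon_{d,2/3}=\tfrac49(8/\pi-5/2)$, which is about $0.021$.

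For the right-hand side, $J_2=B_2-B_1=8/\pi-2$. Next, $b_3=b_2+\tfrac14\binom21=2$ by \eqref{eq:path-increments}, and $a_3=0$, so $B_3=2$ and $J_3=3-8/\pi$. The required inequalities are
\[
 \tfrac49\Bigl(\tfrac8\pi-\tfrac52\Bigr)<\tfrac17\Bigl(\tfrac8\pi-2\Bigr)
 \quad(d=2),
\]
\[
 \tfrac49\Bigl(\tfrac8\pi-\tfrac52\Bigr)<\tfrac2{21}\Bigl(3-\tfrac8\pi\Bigr)
 \quad(d=3).
\]
Both are linear in $1/\pi$. The first rearranges to $\pi>3.136\ldots$ and holds with room to spare, since its sides are about $0.021$ and $0.078$. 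The second is also comfortable, with a right side near $0.043$. I would clear denominators in each and verify it using only the rational bounds $3.14<\pi<22/7$.
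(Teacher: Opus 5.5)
Your proposal is correct and follows the paper's proof essentially step for step: for $d\ge4$ you apply the general budget with $p=2/3$ and use monotonicity of $T_d$ from $T_4=1575\pi/4096$, and for $d=2,3$ you evaluate exactly via $a_2=8/\pi-5/2$, $J_2=8/\pi-2$, $J_3=3-8/\pi$ and $\PP(Y_{d,2/3}=2)=4/9$. One small arithmetic slip: after clearing denominators the $d=2$ inequality is $152/(63\pi)<52/63$, i.e.\ $\pi>38/13\approx2.92$, not $\pi>3.136$ (and the $d=3$ inequality is $\pi>34/11$); the correct thresholds are weaker, so your planned check with $\pi>3.14$ still succeeds.
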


\begin{proof}
Use the quantities $T_d$ from~\eqref{eq:T}. For $p=2/3$,
Lemma~\ref{lem:budget} gives
\[
 \frac{d\varepsilon_{d,2/3}}{J_d}<\frac{3\sqrt6}{32}T_d.
\]
The recurrence~\eqref{eq:T-recurrence} shows that the sequence decreases, with
$T_4=1575\pi/4096$. For $d\ge4$, using $\sqrt6<5/2$ and $\pi<22/7$,
\[
 \frac{3\sqrt6}{32}T_d
 \le\frac{3\sqrt6}{32}T_4
 <\frac{15}{64}\frac{2475}{2048}
 =\frac{37125}{131072}<\frac27.
\]
The last inequality reduces to the integer comparison $259875<262144$.
For $d=2,3$, use $B_1=1$, $B_2=8/\pi-1$, $B_3=b_3=2$ and
$b_2=3/2$ to obtain
\begin{align*}
 \varepsilon_{2,2/3}-\frac{J_2}{7}
     &=\frac{152}{63\pi}-\frac{52}{63}<0,\\
 \varepsilon_{3,2/3}-\frac{2J_3}{21}
     &=\frac{272}{63\pi}-\frac{88}{63}<0.
\end{align*}
The two inequalities follow respectively from $\pi>38/13$ and
$\pi>34/11$, both consequences of $\pi>31/10$. This covers the remaining
values of $d$.
\end{proof}

\begin{proof}[Proof of Proposition~\ref{prop:lazy-contraction}]
Suppose first that $G$ contains a cycle. Lemma~\ref{lem:lazy-forest}
verifies the hypotheses of Proposition~\ref{prop:comparison} with
$p=2/3$. Applying the rank and parity estimates from
Lemmas~\ref{lem:lazy-surplus} and~\ref{lem:lazy-budget}, and identifying
the path expectation through~\eqref{eq:lazy-path}, we obtain
\[
 \E b_{K-1}-h(P_n)
 \le\varepsilon_{d,2/3}-\Delta_G J_d-\E a_{K-1}
 \le\varepsilon_{d,2/3}-\frac{2J_d}{7d}-\E a_{K-1}<0.
\]
If $G$ is a tree, its rooted lazy functions are in bijection with all
assignments of increments in $\{-1,0,1\}$ to its oriented edges.
Thus the increments are independent and uniform. Its zero-edge rank
equals the number of zero edges, giving $K-1\sim\Bin(d,2/3)$ and
equality by~\eqref{eq:lazy-path}. This includes the one-vertex tree.
\end{proof}

\begin{proposition}[The implication BHM $\Longrightarrow$ LNR]
\label{prop:implication}
If the BHM expectation inequality holds for every finite connected
bipartite simple graph, then the LNR expectation inequality holds for
every finite connected simple graph. The latter is strict on graphs
containing a cycle.
\end{proposition}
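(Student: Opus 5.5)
The plan is to apply Lemma~\ref{lem:fullzero} to the uniform lazy model on $G$ and then invoke BHM on each quotient. For the uniform lazy measure the weight is constant, so in particular it depends only on the full zero set $A_f$. Lemma~\ref{lem:fullzero} therefore gives
\[
 h(G)=\E R(f)=\E\,\widehat h(G/A_f),
\]
where each realized quotient $G/A_f$ is a connected bipartite simple graph. Its number of vertices is the number $K$ of zero-edge components.

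Assuming the BHM expectation inequality, we have $\widehat h(G/A_f)\le\widehat h(P_K)=b_{K-1}$ pointwise for every realization. Taking expectations yields
\[
 h(G)\le\E b_{K-1}.
\]
Proposition~\ref{prop:lazy-contraction}, which was proved independently of Theorem~\ref{thm:bhm}, then gives $\E b_{K-1}\le h(P_n)$. Chaining the two inequalities proves \eqref{eq:lnr}. The one-vertex quotient, arising when $f\equiv 0$, is harmless: both sides equal $0=b_0$.

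For strictness, suppose $G$ contains a cycle. The equality clause of Proposition~\ref{prop:lazy-contraction} says that equality $\E b_{K-1}=h(P_n)$ holds only for trees. Hence the second inequality is strict, and so $h(G)\le\E b_{K-1}<h(P_n)$. Only the weak form of BHM is needed in the first step, so no strictness statement for the bipartite quotients is required.

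The only point needing care is that every realized zero set has positive probability and produces a simple quotient to which BHM applies. Lemma~\ref{lem:fullzero} already supplies this, since it asserts that $S/A$ is connected and bipartite for every zero set $A$ of positive probability. Beyond that I do not expect any obstacle: all the real work sits in Proposition~\ref{prop:lazy-contraction}, and the implication itself is a two-line composition.
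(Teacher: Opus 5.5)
Your proposal is correct and follows the paper's proof: condition on the full zero set via Lemma~\ref{lem:fullzero} to get $h(G)=\E\widehat h(G/A_f)$, and apply the assumed BHM inequality to each realized quotient to obtain $h(G)\le\E b_{K-1}$. Proposition~\ref{prop:lazy-contraction} then finishes the argument, and its strictness for graphs containing a cycle gives the strict LNR inequality.
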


\begin{proof}
Since $f\in\LL(G)$ is uniform, its sampling weight is constant.
Conditioning on its full zero set and applying
Lemma~\ref{lem:fullzero} gives
\[
 h(G)=\E\widehat h(G/A_f).
\]
For every realized zero set, $G/A_f$ is a connected bipartite graph on
$K$ vertices. We may therefore apply the assumed BHM inequality to
each quotient. Proposition~\ref{prop:lazy-contraction} then completes
the comparison:
\begin{equation}\label{eq:implication}
 h(G)=\E\widehat h(G/A_f)
      \le\E b_{K-1}\le h(P_n).
\end{equation}
The last inequality is strict when $G$ has a cycle.
\end{proof}

\begin{proof}[Proof of Corollary~\ref{cor:lnr}]
Theorem~\ref{thm:bhm}, already proved in Section~\ref{sec:bhmproof},
supplies the hypothesis of Proposition~\ref{prop:implication}.
\end{proof}

\paragraph{Running example: the implication on $K_{2,3}$.}
Finally, sample $f$ uniformly from $\LL(G_\star,u_0)$.
Here the full zero set is taken in the original five-vertex graph.
There are $45$ rooted lazy functions: fixing $f(u_1)$ to be $\pm2$
gives one function for each sign, fixing it to be $\pm1$ gives
eight for each sign, and fixing it to be zero gives $3^3=27$.
Their zero-edge quotients are listed in Table~\ref{tab:running-lazy}.
For example, if $f(u_1)=0$ and exactly one white vertex has height
zero, its two incident zero edges merge the two black vertices with
that white vertex. The quotient is $P_3$, and there are
$3\cdot2^2=12$ such functions. If exactly two white vertices have
height zero, there are $3\cdot2=6$ functions with quotient $P_2$;
the $16$ functions with $f(u_1)=\pm1$ also have this quotient.
The constant function gives $P_1$, and the ten standard functions
have no zero edges and retain $G_\star$ itself.

\begin{table}[htbp]
\centering
\caption{Zero-edge contraction for the uniform lazy model on
$G_\star=K_{2,3}$. The count $N_K$ is the number of rooted height
functions with $K$ zero-edge components. For each row, all quotients
are isomorphic to the displayed graph.}
\label{tab:running-lazy}
\smallskip
\begin{tabular}{ccccc}
\toprule
$K$ & $q=5-K$ & $G_\star/A_f$ & $N_K$ &
 $\E[R(f)\mid K]$ \\
\midrule
$1$ & $4$ & $P_1$       & $1$  & $0$ \\
$2$ & $3$ & $P_2$       & $22$ & $1$ \\
$3$ & $2$ & $P_3$       & $12$ & $3/2$ \\
$5$ & $0$ & $K_{2,3}$   & $10$ & $9/5$ \\
\bottomrule
\end{tabular}
\end{table}

By Lemma~\ref{lem:fullzero}, the last column consists of the standard
expected ranges of the quotients. Averaging gives the first term
below; replacing each quotient by the path of the same order gives
the middle term. Thus~\eqref{eq:implication} becomes
\[
 h(G_\star)=\frac{58}{45}
   <\E b_{K-1}=\frac{17}{12}
   <\frac{146}{81}=h(P_5).
\]
Indeed, the middle numerator before division by $45$ is
$22b_1+12b_2+10b_4$, and~\eqref{eq:lazy-path} gives the final value.
The first inequality is strict because the no-zero event retains
$K_{2,3}$, whose standard expected range is strictly below that of
$P_5$. The second is the contraction comparison on this cyclic graph.

\begin{remark}
The equality assertion in Proposition~\ref{prop:lazy-contraction}
concerns $\E b_{K-1}$, rather than $h(G)$. For a tree, the quotient
$G/A_f$ need not be a path, so the first inequality in
\eqref{eq:implication} can still be strict. In particular, the
contraction comparison does not assert equality in LNR for every tree.
\end{remark}

\subsection*{Use of artificial intelligence}
The proof was obtained through interaction with OpenAI GPT-6 Astra
and verified by the author. OpenAI Codex was used to audit the arguments
and develop the accompanying Lean~4 formalization. The author takes
full responsibility for the mathematical content and the final manuscript.

\bibliographystyle{amsplain}
\bibliography{references}
\end{document}